\title{ Rings with finite $n$-Weak injective dimension and $(n,k)$-Weak cotorsion  modules}
\date{}
\author{}
\newtheorem{thm}{Theorem}[section]
 \newtheorem{cor}[thm]{Corollary}
 \newtheorem{lem}[thm]{Lemma}
 \newtheorem{prop}[thm]{Proposition}
 \newtheorem{Def}[thm]{Definition}
\newtheorem{rem}[thm]{Remark}
 \newtheorem{ex}[thm]{Example}
\newcommand{\X}{\rm \mathscr{X}}
\def\Ext{{\rm Ext}}
\def\Tor{{\rm Tor}}
\def\Hom{{\rm Hom}}
\def\Im{{\rm Im}}
\def\Ker{{\rm Ker}}
\newcommand{\F}{{\cal F}}
\def\Mod{{\rm Mod}}
\def\Tor{{\rm Tor}}
\begin{document}

\thispagestyle{empty}

\maketitle \vspace*{-1.5cm}
\begin{center}{\large\bf Mostafa Amini$^{1,a}$, Houda Amzil$^{2,b}$ and  Driss   Bennis$^{2,c}$ }
\bigskip

\small{1. Department of Mathematics, Faculty of Sciences, Payame Noor University, Tehran, Iran.\\
2. Department of Mathematics, Faculty of Sciences, Mohammed V University in Rabat, Morocco.\\
$\mathbf{a.}$ amini.pnu1356@gmail.com \\
$\mathbf{b.}$ houda.amzil@um5r.ac.ma; ha015@inlumine.ual.es \\
$\mathbf{c.}$  driss.bennis@um5.ac.ma; driss$\_$bennis@hotmail.com \\}

\small{$\mathbf{}$  
}
\end{center}

\bigskip

\noindent{\large\bf Abstract.} Let $R$ be a  ring and $n, k$ be two non-negative integers. As an extension of several known notions, we introduce and study $(n,k)$-weak cotorsion modules using the class of right $R$-modules with $n$-weak flat dimensions at most $k$. Various examples and applications are also given. \bigskip

\small{\noindent{\bf Keywords:}    }
$(n,k)$-Weak cotorsion module; $n$-weak injective module; $n$-weak flat module; $n$-super finitely presented module.\medskip

\small{\noindent{\bf 2010 Mathematics Subject Classification.} 16D80, 16E05, 16E30, 16E65}
\bigskip\bigskip
%

\section{Introduction  }

Throughout this paper, $R$ will denote an associative (not necessarily commutative) ring with identity.
As usual, we denote by $R$-$\Mod$ and $\Mod$-$R$ the category of left $R$-modules and right R-modules, respectively. The class of all right (resp. left) $R$-modules of flat dimension at most a non-negative integer $k$ is denoted by $\mathscr{F}_{ k}(R^{op})$ (resp. $\mathscr{F}_{ k}(R)$) and for a module $M$, $E(M)$ denotes its injective envelope.

In 1984, Enochs introduced the concept of cotorsion modules as a generalization of cotorsion abelian groups \cite{EJ}. It differs from  Matlis' definition  in  \cite{ENA} whose  concern  was  with  domains. Since then, the investigation of these modules have become a vigorously active area of research. We refer the reader to \cite{GA,EJ,LM} for background on cotorsion modules. In 2007,  Mao and Ding in \cite{LM3}  introduced the concept of $k$-cotorsion left modules as a new classification of cotorsion modules by using the class $\mathscr{F}_{ k}(R)$.

 In 2015, Gao and Wang in \cite{Z.W}  introduced the concept  of weak injective and weak flat modules by using  super finitely presented modules instead of finitely presented modules. Then, they investigated the properties of $R$-modules with weak injective and weak flat dimension at most a non-negative integer $k$. Also, Gao and Huang in \cite{Z.H}, investigated the right  $\mathcal{WI}$-dimension of modules in terms of left derived functors of Hom and the left  $\mathcal{WI}$-resolutions of modules, where  $\mathcal{WI}$ is the class of weak injective $R$-modules. This process continues in 2018 when Zhao in \cite{.NG}  introduced another relative derived functor in terms of left or right $\mathcal{WF}$-resolutions to compute the weak flat dimension of modules,  where  $\mathcal{WF}$ is the class of weak flat $R$-modules. Then, they investigated homological properties of modules with finite weak injective and weak flat dimensions. Let $\mathcal{WI}_{ k}(R)$ and $\mathcal{WF}_{ k}(R^{op})$ be, respectively, the classes of left and right $R$-modules with weak
injective and weak flat dimensions at most $k$, respectively.
In 2018, Selvaraj and Prabakaran in \cite{B2B}, introduced a particular case of $k$-cotorsion modules and called them $k$-weak cotorsion modules by using the class $\mathcal{WF}_{ k}(R^{op})$  instead of  $\mathscr{F}_{ k}(R^{op})$. 

Recently in \cite{AB}, the notion  of $n$-super finitely presented left $R$-modules for any non-negative integer $n$ has been introduced as a particular case of super finitely presented. Then, by using $n$-super finitely presented left $R$-modules, a natural extension of the notions of weak injective and weak flat modules have been introduced and studied. They are called the $n$-weak injective and $n$-weak flat modules, respectively. Furthermore, some homological aspects of modules with finite $n$-weak injective and $n$-weak flat dimensions have been also investigated in \cite{AB}. Throughout,  $\mathcal{WI}^n(R)$, $\mathcal{WF}^n(R^{op})$, $\mathcal{WI}_{ k}^n(R)$ and $\mathcal{WF}_{ k}^n(R^{op})$ will denote respectively the classes of $n$-weak injective left $R$-modules,  $n$-weak flat right $R$-modules,  left $R$-modules with $n$-weak
injective  dimensions less than or equal to $k$ and right $R$-modules with $n$-weak flat dimensions less than or equal to $k$.

In this paper, a natural extension of the notion of weak cotorsion modules is introduced and studied which is called $(n,k)$-weak cotorsion  modules (see Definition \ref{Def-nk-weak-cot}). Every $(n,k)$-weak cotorsion module is $k$-weak cotorsion, $k$-cotorsion and cotorsion, and $(0,k)$-weak cotorsion modules are exactly $k$-weak cotorsion modules.

The paper is organized as follows:

In Sec. 2, some fundamental concepts and some preliminary results are stated. Namely, we recall the definitions of $n$-super finitely presented, $n$-weak injective  left $R$-modules, and $n$-weak flat  right  $R$-modules introduced in \cite{AB} (see Definitions \ref{Def-nsfp} and \ref{Def-nwi-nwf}). 

In Sec. 3, we introduce $(n,k)$-weak cotorsion modules. Then, examples are given to show that for $n>m\geq 0$, $(m,k)$-weak cotorsion $R$-modules need not be $(n,k)$-weak cotorsion (see Remarks \ref{1.33}). Moreover, we show that if $n$-${\rm wid}_{R}(R)\leq k$ and $M$ is an $R$-module, then
$M$ is an $(n,k)$-weak cotorsion  if and only if $M$ is a direct sum of an injective right $R$-module and a reduced $(n,k)$-weak cotorsion right $R$-module (see Theorem \ref{thm-M-nk-cotorsion}).

In Sec. 4, we prove some equivalent characterizations of rings with finite $n$-super finitely presented dimensions. Namely, for a coherent ring $R$ such that  $n$-${\rm wid}_{R}(_RR)\leq k$, we prove that  ${\rm l.n.sp. gldim}(R)\leq k$ if and only if 
every right $R$-module in $\mathcal{WI}_{ k-1}^{n}(R)^{\top}$ is $n$-weak flat if and only if every right $R$-module in $\mathcal{WI}_{ k}^{n}(R)^{\top}$ is $n$-weak flat if and only if every left $R$-module in $^{\bot}\mathcal{WI}_{ k}^{n}(R)$ belongs to $\mathcal{WI}_{ k}^{n}(R)$ if and only if every  left $R$-module in $\mathcal{WI}_{ k}^{n}(R)^{\bot}$ is $n$-weak injective if and only if every left $R$-module  has a monic $\mathcal{WI}_{ k-1}^{n}(R)$-cover (see Theorem \ref{Thm-lnspg-finite}). Then, we show that  ${\rm l.nsp. gldim}_{R}(R)\leq k$ if and only if every $(n, k)$-weak cotorsion right $R$-module is injective if and only if every $(n, k)$-weak cotorsion right $R$-module is in $\mathcal{WF}_{ k}^{n}(R^{op})$ (see Theorem \ref{Thm-lnspgldim-k}), and if every $(n, k)$-weak cotorsion right $R$-module has a $\mathcal{WF}_{ k}^{n}(R^{op})$-envelope with the unique mapping property, then ${\rm l.n.sp. gldim}(R)\leq k+2$ (see Proposition \ref{prop-lnspgldim-k+2}).
\section{Preliminaries}

In this section, some fundamental concepts are recalled and notations are stated.

A right $R$-module $M$ is said to be {\it cotorsion} \cite{EJ} if ${\rm Ext}_{R}^{1}(F,M)=0$ for any flat right $R$-module $F$.  For a non-negative integer $k$, a right $R$-module $M$ is said to be {\it $k$-cotorsion} \cite{LM} if ${\rm Ext}_{R}^{1}(F,M)=0$ for any right $R$-module $F\in\mathscr{F}_{k}(R^{op})$.  A left $R$-module $U$ is called {\it super finitely presented} \cite{Z.G} if there exists an exact sequence
$ \cdots\rightarrow  F_{2}\rightarrow  F_1\rightarrow  F_0\rightarrow  U\rightarrow  0$, where each $F_i$
is finitely generated and free.  A left $R$-module $M$ is called  {\it weak injective} \cite{Z.H} if ${\rm Ext}_{R}^{1}(U, M)=0$ for any super finitely presented left $R$-module $U$. A right $R$-module $M$ is called  {\it weak flat} \cite{Z.H} if ${\rm Tor}_{1}^{R}(M, U)=0$ for any super finitely presented left $R$-module $U$.  For a non-negative integer $k$, a right $R$-module $M$ is called  {\it $k$-weak cotorsion} \cite{B2B} if ${\rm Ext}_{R}^{1}(F, M)=0$ for any $F\in\mathcal{WF}_{ k}(R^{op})$. A left $R$-module $M$ is said to be {\it FP-injective} \cite{Su} if ${\rm Ext}_{R}^{1}(N,M)=0$ for any finitely presented left $R$-module $N$.

\begin{Def}[\cite{AB}, Definition 2.1]\label{Def-nsfp} 
Let  $n$ be a non-negative integer. A left $R$-module $U$ is said to be $n$-super finitely presented if there exists an exact sequence 
$$ \cdots\to F_{n+1}\to F_{n}\to\cdots\to F_1\to F_0\to U\to 0$$ of projective $R$-modules, where each $F_i$ is  finitely generated and projective for any $i\geq n$.
 If $K_{i}:={\rm Im}(F_{i+1}\to F_{i})$, then for $i=n-1$, the module $K_{n-1}$ is called special super finitely presented. 
\end{Def}
Notice that ${\rm Ext}_{R}^{n+1}(U,M)\cong{\rm Ext}_{R}^{1}(K_{n-1},M)$ and  ${\rm Tor}_{n+1}^{R}(N,U)\cong {\rm Tor}_{1}^{R}(N,K_{n-1})$, where $U$ is an $n$-super finitely presented left module with an associated special super finitely presented module $K_{n-1}$. This fact will be used throughout the paper.

$0$-Super finitely presented modules are just super finitely presented modules. Also for any $m\geq n$, every $n$-super finitely presented left $R$-module is $m$-super finitely presented but not conversely (see Example \ref{1.ya} and \cite[Examples 2.4 and 2.5]{AB}).
 
The finitely presented dimension of an $R$-module $A$ is defined as
${\rm f.p.dim}_{R}(A)={\inf}\{m \mid \text{there exists an exact sequence} \  {\rm F}_{n+1}\to {\rm F}_{n} \to \cdots \to {\rm F}_1\to {\rm F}_0\to {\rm A}\to 0$  of $R$-modules, where each ${\rm F}_{i}$ is projective, and ${\rm F}_{n}$ and ${\rm F}_{n+1}$ are finitely generated$\}$.
We also define the finitely presented dimension of $R$ (denoted by ${\rm f.p.dim}(R)$) as $\sup\{{\rm f.p.dim}_{R}(A) \ \mid \ A \ \text{is a finitely  generated} \ R\text{-module} \}$. 

Also, $R$ is called an $(a,b,c)$-ring if ${\rm w.gl.dim}(R)=a$, ${\rm gl.dim}(R)=b$ and ${\rm f.p.dim}(R)=c$ (see \cite{HKN}).
\begin{ex}\label{1.ya}
Let $R=R_{1}\oplus R_{2}$, where $R_{1}$ is a ring of polynomials in $4$ indeterminates over a field $k$, and $R_{2}$ is a
$(3,3,4)$-ring (see, \cite[Example 2.4]{AB} and \cite[Proposition 3.10]{HKN}).
 Then by \cite[Proposition 3.8]{HKN}, $R$  is a coherent $(4,4,4)$-ring. Then, ${\rm f.p.dim}(R)=4$ and so there exists a finitely generated
$R$-module $U$ with
   ${\rm f.p.dim}_{R}(U)=4$. Hence,  there exists an exact sequence
$F_{5}\rightarrow F_{4}\rightarrow F_{3}\rightarrow  F_{2}\rightarrow F_1\rightarrow F_0\rightarrow U\rightarrow 0,$
where $F_4$ and $F_{5}$ are finitely generated and projective $R$-modules. Also, $K_3:={\rm Im}( F_4\rightarrow F_3)$ is a special super finitely presented module, since $R$ is coherent. So by Definition \ref{Def-nsfp}, $U$ is $4$-super finitely presented. But, every $4$-super finitely presented is not $3$-super finitely presented otherwise   ${\rm f.p.dim}(R)=3$, a  contradiction. 
\end{ex}
\begin{Def}[\cite{AB}, Definition 2.2]\label{Def-nwi-nwf} 
Let $n$ be a non-negative integer. Then, a left $R$-module $M$ is called $n$-weak injective if ${\rm Ext}_{R}^{n+1}(U,M)=0$ for every $n$-super finitely presented left $R$-module $ U$. A right $R$-module $N$ is called $n$-weak flat
if ${\rm Tor}_{n+1}^{R}(N,U)=0$ for every $n$-super finitely presented left $R$-module $U$.
\end{Def}

$0$-Weak injective modules are just weak injective modules and  $0$-weak flat modules are just weak flat modules. And for any $m\geq n$, every  $n$-weak injective and  every $n$-weak flat module is $m$-weak injective and  $m$-weak flat respectively, but not
conversely (see Example \ref{1.3a}(1)) and \cite[Example  2.5(2)]{AB}.

 The left super finitely presented dimension ${\rm l.sp. gldim}(R)$ of $R$ is defined as
 ${\rm l.sp. gldim}(R):= {\rm sup}\{{\rm pd}_{R}(M) \mid M \ \text{is a super finitely presented left} \ R\text{-module}\}$, whereas the $n$-super finitely presented dimension of a ring $R$ is defined as follows:
${\rm l.n.sp. gldim}(R):={\rm sup}\{{\rm pd}_{R}(K_{n-1}) \mid K_{n-1} \ \text{is a special super finitely presented left} \ R\text{-module} \}$ (see \cite[Definition 4.9]{AB}).

It is easy to see that for any $n\geq 0$, ${\rm l.n.sp. gldim}(R)\leq{\rm l.sp. gldim}(R)$. Furthermore, Example \ref{1.3a}(3) shows that we do not have an equality, but if $n=0$, then clearly ${\rm l.n.sp. gldim}(R)={\rm l.sp. gldim}(R)$.

In \cite[Theorem 3.8]{Z.W}, it is proved that ${\rm l.sp. gldim}(R) \leq{\rm w.gl.dim}(R)$. Then, it is easy to obtain the following result.
 
\begin{prop}\label{prop-lnsp-wgldim}
 For any $n\geq 0$, ${\rm l.n.sp.gldim}(R)\leq{\rm w.gl.dim}(R)$.
\end{prop}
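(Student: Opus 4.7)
The proof should be a very short chain of inequalities, combining two facts that the authors have already laid down in the immediately preceding paragraphs. My plan is as follows.

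First, I would recall the observation stated just above the proposition, namely that $\mathrm{l.n.sp.gldim}(R)\leq\mathrm{l.sp.gldim}(R)$ for every $n\geq 0$. The reason is built into the definitions: a special super finitely presented module $K_{n-1}$ arises as the image of $F_n\to F_{n-1}$ in a resolution of an $n$-super finitely presented module, so by truncating we obtain an exact sequence $\cdots\to F_{n+1}\to F_n\to K_{n-1}\to 0$ of finitely generated projective modules. Thus $K_{n-1}$ is itself ($0$-)super finitely presented, and hence its projective dimension is bounded by $\mathrm{l.sp.gldim}(R)$; taking the supremum over all such $K_{n-1}$ gives the first inequality.

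Next, I would invoke Gao--Wang's result \cite[Theorem 3.8]{Z.W}, which gives $\mathrm{l.sp.gldim}(R)\leq\mathrm{w.gl.dim}(R)$. Chaining these two inequalities yields
\[
\mathrm{l.n.sp.gldim}(R)\leq\mathrm{l.sp.gldim}(R)\leq\mathrm{w.gl.dim}(R),
\]
which is the desired bound. No separate argument is needed.

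Since the proof is essentially bookkeeping, there is no real obstacle to overcome; the only point that might warrant a half-line of justification is precisely the first inequality, to make sure the reader sees why a special super finitely presented module is super finitely presented in the $0$-sense (via the truncation of its finitely generated projective resolution). Everything else is a direct appeal to the cited theorem of Gao and Wang.
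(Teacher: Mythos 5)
Your proposal is correct and is exactly the argument the paper intends: the result is obtained by chaining the observation ${\rm l.n.sp.gldim}(R)\leq{\rm l.sp.gldim}(R)$ (stated just before the proposition) with Gao--Wang's inequality ${\rm l.sp.gldim}(R)\leq{\rm w.gl.dim}(R)$. Your added justification that a special super finitely presented module $K_{n-1}$ is itself super finitely presented (via truncation, noting that a resolution by finitely generated projectives can be replaced by one with finitely generated free modules) is a helpful filling-in of the step the paper dismisses as "easy to see."
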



Let $\F$ be a class of $R$-modules and $M$ be an $R$-module. Following \cite{EM}, we say that a  morphism $f : F\to M$ is an
$\F$-precover of $M$ if $F\in\F$ and ${\Hom}_{R}(F^{'}, F) \to {\Hom}_{R}(F^{'},M)\to 0$ is exact
for any $F'\in\F$. An $\F$-precover $f:F\to M$ is said to be an $\F$-cover if every endomorphism $g : X \to X$ such
that $fg = f$ is an isomorphism. Dually, we have the definitions of an $\F$-preenvelope and an $\F$-envelope. The
class $\F$ is called (pre)covering if each $R$-module has an $\F$-(pre)cover. Similarly, if every
$R$-module has an $\F$-(pre)envelope, then we say that $\F$ is (pre)enveloping.

To any given class of right $R$-modules $\mathcal{L}$  and class of left $R$-modules $\mathcal{L'}$, we associate its orthogonal classes as follows:$${\mathcal{L}}^{\bot}={\Ker Ext}_{R}^{1}(\mathcal{L}, -)=\{C\in R\text{-}\Mod \mid {\Ext}_{R}^{1}(L, C)=0 \ \text{for any} \ L\in \mathcal{L}\},$$
$$^{\bot}{\mathcal{L}}={\Ker Ext}_{R}^{1}(-, \mathcal{L})=\{C\in R\text{-}\Mod \mid {\rm Ext}_{R}^{1}(C, L)=0 \ \text{for any} \ L\in\mathcal{L}\},$$
$${\mathcal{L}}^{\top}={\Ker \Tor}^{R}_{1}(\mathcal{L}, -)=\{C\in R\text{-}\Mod \mid {\rm Tor}^{R}_{1}(L, C)=0 \ \text{for any} \ L\in\mathcal{L}\},$$
$$^{\top}{\mathcal{L}'}={\Ker Tor}^{R}_{1}(-,\mathcal{L}')=\{C\in \Mod\text{-}R \mid {\rm Tor}^{R}_{1}(C, L)=0 \ \text{for any} \ L\in\mathcal{L'}\}.$$
A pair $(\F, \mathcal{C})$ of classes of  $R$-modules is  called a cotorsion theory \cite{EM} if $\F^{\bot}= \mathcal{C}$ and $\F= {^{\bot}\mathcal{C}}$.
A cotorsion theory $(\F, \mathcal{C})$ is called hereditary if whenever $0 \to F^{'}\to F\to F^{''}\to 0$ is exact with $F, F^{''}\in \F$ then $F^{'}\in\F$, or equivalently, if $0 \to C^{'}\to C\to C^{''}\to 0$ is an exact sequence with $C, C^{'}\in \mathcal{C}$, then $C^{''}\in\mathcal{C}$. A cotorsion theory $(\F, \mathcal{C})$ is called complete \cite{TAT} if every $R$-module has a special $\mathcal{C}$-preenvelope; that is, a monic $\mathcal{C}$-preenvelope with cokernel in $^{\bot}\mathcal{C}$ (and a special $\F$-precover; that is, an epic $\F$-precover with kernel in $\F^{\bot}$). A cotorsion theory $(\F, \mathcal{C})$ is called perfect  if every $R$-module has a $\mathcal{C}$-envelope and an $\F$-cover (see \cite{EOO,ZGR}).
 
 \begin{Def}[\cite{AB}, Definition 3.1]\label{2.1}
 	Let $n$ be a non-negative integer. Then, the $n$-weak injective dimension of a left module $M$ is defined by:
 	
 	$n$-${\rm wid}_{R}(M):= {\inf}\{k: \ { \Ext}_{R}^{k+1}(K_{n-1}, M)=0 \ \text{for every special super finitely presented left module} \ K_{n-1}\}$, and the n-weak flat dimension of a right module N is defined by: 
 	
$n$-${\rm wfd}_{R}(N):= \inf\{k: \ {\rm Tor}^{R}_{k+1}(N, K_{n-1})=0 \ \text{for every special super finitely presented left module} \ K_{n-1}\}$.
 \end{Def}
 
 The classes of left and right $R$-modules with $n$-weak injective and $n$-weak flat dimensions at most $k$ will be denoted respectively by $\mathcal{WI}_{ k}^n(R)$ and $\mathcal{WF}_{ k}^n(R^{op})$. In \cite[Theorem 4.4 and 4.5]{AB}, it is proved that $\mathcal{WI}_{ k}^n(R)$ and $\mathcal{WF}_{ k}^n(R^{op})$ are both covering and preenveloping. Here, we give more properties of these two classes of modules. We start with the following result.

\begin{prop}\label{1.5q}
Let $n$ and $k$ be non-negative integers. Then, the following assertions hold:
\begin{enumerate}
\item [\rm (1)] 
If $M$ is the cokernel of a $\mathcal{WI}_{k}^n(R)$-preenvelope $X\to F$ of a left  R-module $X$ with $F$  flat, then $M\in {}^{\top}\mathcal{WF}_{ k}^n(R^{op})$.
\item [\rm (2)] 
If $M$ is the cokernel of a $\mathcal{WF}_{ k}^n(R^{op})$-preenvelope $A\to F$ of a right  R-module $A$ with $F$  flat, then $M \in \mathcal{WI}_{ k}^n(R)^{\top}$.
\end{enumerate}
\end{prop}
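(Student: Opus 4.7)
My plan is to reduce both assertions to character-module duality, which converts the universal property of the preenvelope into the injectivity of a tensor map. Write $N^{+}=\Hom_{\mathbb{Z}}(N,\mathbb{Q}/\mathbb{Z})$ for the character module of an $R$-module $N$.

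The conceptual core is a swap lemma: a left $R$-module $L$ lies in $\mathcal{WI}_{k}^{n}(R)$ if and only if $L^{+}\in\mathcal{WF}_{k}^{n}(R^{op})$, and symmetrically a right $R$-module $L'$ lies in $\mathcal{WF}_{k}^{n}(R^{op})$ if and only if $(L')^{+}\in\mathcal{WI}_{k}^{n}(R)$. I would justify this by observing that every special super finitely presented module $K_{n-1}$ admits a projective resolution by finitely generated projective modules (the tail of the defining resolution of the ambient $n$-super finitely presented module), which yields the natural isomorphisms
\[\Ext_{R}^{k+1}(K_{n-1},L)^{+}\cong\Tor_{k+1}^{R}(L^{+},K_{n-1})\quad\text{and}\quad\Tor_{k+1}^{R}(L',K_{n-1})^{+}\cong\Ext_{R}^{k+1}(K_{n-1},(L')^{+});\]
since $\mathbb{Q}/\mathbb{Z}$ is a faithful injective cogenerator, vanishing on one side is equivalent to vanishing on the other, and the definitions of $\mathcal{WI}_{k}^{n}(R)$ and $\mathcal{WF}_{k}^{n}(R^{op})$ give the swap.

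To prove (1), fix $L\in\mathcal{WF}_{k}^{n}(R^{op})$; by the swap, $L^{+}\in\mathcal{WI}_{k}^{n}(R)$. The preenvelope property of $X\to F$ gives surjectivity of $\Hom_{R}(F,L^{+})\to\Hom_{R}(X,L^{+})$. Factoring $X\to F$ through its image $X'\subseteq F$ and cancelling the epimorphism $X\twoheadrightarrow X'$, the map $\Hom_{R}(F,L^{+})\to\Hom_{R}(X',L^{+})$ is likewise surjective. Via Hom-tensor adjunction and the faithfulness of $(-)^{+}$, this is equivalent to injectivity of $L\otimes_{R}X'\to L\otimes_{R}F$. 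Applying the long exact $\Tor$-sequence to $0\to X'\to F\to M\to 0$ together with flatness of $F$ then forces $\Tor_{1}^{R}(L,M)=0$, so $M\in{}^{\top}\mathcal{WF}_{k}^{n}(R^{op})$.

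Part (2) mirrors (1): for $L\in\mathcal{WI}_{k}^{n}(R)$ the swap delivers $L^{+}\in\mathcal{WF}_{k}^{n}(R^{op})$, so the preenvelope property of $A\to F$ gives surjectivity of $\Hom_{R}(F,L^{+})\to\Hom_{R}(\Im(A\to F),L^{+})$, hence injectivity of $\Im(A\to F)\otimes_{R}L\to F\otimes_{R}L$, and the $\Tor$-sequence of $0\to\Im(A\to F)\to F\to M\to 0$ combined with flatness of $F$ yields $\Tor_{1}^{R}(M,L)=0$, whence $M\in\mathcal{WI}_{k}^{n}(R)^{\top}$. The main obstacle is the swap lemma; once it is in hand, both parts reduce to a single application of Hom-tensor adjunction with the long exact sequence of $\Tor$.
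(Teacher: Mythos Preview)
Your proof is correct and takes essentially the same approach as the paper: both use the character-module swap between $\mathcal{WF}_k^n(R^{op})$ and $\mathcal{WI}_k^n(R)$ to convert the preenvelope property into surjectivity of a $\Hom$ map, dualize via Hom--tensor adjunction to injectivity of the tensor map, and finish with the long exact $\Tor$-sequence together with flatness of $F$. You are in fact slightly more careful than the paper in part~(1), explicitly passing to the image $X'=\Im(X\to F)$, whereas the paper writes $0\to X\to F\to M\to 0$ as though the preenvelope were already monic (it only makes this replacement explicit in part~(2)).
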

\begin{proof}
(1) Consider the short exact sequence $0\to X\to F\to M\to 0$. If $N\in\mathcal{WF}_{ k}^n(R^{op})$, then by \cite[Proposition 3.6]{AB}, $N^*\in\mathcal{WI}_{k}^n(R)$. Thus, ${\Hom}_{R}(F, N^*)\to {\Hom}_{R}(X, N^*)\to 0$ is exact and so
 $ (F\otimes_{R}N)^{*}\to (X\otimes_{R}N)^{*}\to 0$ is exact. Hence, $ 0\to X\otimes_{R}N \to  F\otimes_{R}N $ is exact and from the exactness of the long exact sequence $ 0\to{\rm Tor}_{1}^{R}(M,N)\to X\otimes_{R}N \to  F\otimes_{R}N $ (because $F$ is flat), it follows that ${\rm Tor}_{1}^{R}(M,N)=0$  and so $M\in {}^{\top}\mathcal{WF}_{ k}^n(R^{op})$.

 (2) Consider the exact sequence $0\to X\to F\to M\to 0$, where $X=\Im(A\to F)$. By using similar arguments in (1), we prove that $M\in\mathcal{WI}_{ k}^n(R)^{\top}$.
\end{proof} 
\begin{cor}\label{1.5qq}
Let $n$ and $k$ be non-negative integers. Then, $^{\bot}\mathcal{WF}_{ k}^n(R^{op})\subseteq\mathcal{WI}_{ k}^n(R)^{\top}$.
\end{cor}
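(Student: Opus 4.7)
Plan. Let $M$ be a right $R$-module in ${}^{\bot}\mathcal{WF}_{k}^n(R^{op})$. My goal is to show that $\Tor_{1}^{R}(N, M) = 0$ for every $N \in \mathcal{WI}_{k}^n(R)$, so that $M \in \mathcal{WI}_{k}^n(R)^{\top}$. The plan is to dualize via the character module $N^{*} := \Hom_{\Z}(N, \Q/\Z)$ and then appeal to the standard Ext/Tor adjunction.

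The key input is the implication $N \in \mathcal{WI}_{k}^n(R) \Rightarrow N^{*} \in \mathcal{WF}_{k}^n(R^{op})$, which is the converse of the duality used (and cited from \cite[Proposition 3.6]{AB}) in the proof of Proposition~\ref{1.5q}. I would either quote this converse from \cite{AB} or prove it directly as follows. By definition, every special super finitely presented left module $K_{n-1}$ admits a resolution by finitely generated projective left $R$-modules $\cdots\to G_{1}\to G_{0}\to K_{n-1}\to 0$. On each finitely generated projective $G_{i}$, the canonical map $N^{*}\otimes_{R} G_{i}\to \Hom_{R}(G_{i}, N)^{*}$ is an isomorphism (check on $G=R$ and extend by additivity and direct summands). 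Combining this degreewise iso with the exactness of $(-)^{*}$ yields a natural isomorphism $\Tor_{k+1}^{R}(N^{*}, K_{n-1})\cong \Ext_{R}^{k+1}(K_{n-1}, N)^{*}$, which, via the faithfulness of $(-)^{*}$, translates $\Ext_{R}^{k+1}(K_{n-1}, N)=0$ (the hypothesis $N\in \mathcal{WI}_{k}^n(R)$) into $\Tor_{k+1}^{R}(N^{*}, K_{n-1})=0$ (the conclusion $N^{*}\in \mathcal{WF}_{k}^n(R^{op})$).

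With this in hand the corollary is immediate: for any $N\in\mathcal{WI}_{k}^n(R)$, the hypothesis on $M$ combined with $N^{*}\in\mathcal{WF}_{k}^n(R^{op})$ gives $\Ext_{R}^{1}(M, N^{*})=0$, and the classical adjunction isomorphism $\Ext_{R}^{1}(M, N^{*})\cong \Tor_{1}^{R}(N, M)^{*}$ together with the faithfulness of $(-)^{*}$ forces $\Tor_{1}^{R}(N, M)=0$, hence $M\in\mathcal{WI}_{k}^n(R)^{\top}$. The main obstacle is the duality step; everything else is a formal manipulation with adjunctions, and if \cite[Proposition 3.6]{AB} already records both directions of the character-module duality, the proof collapses to a single line.
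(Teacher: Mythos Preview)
Your argument is correct and rests on the same two ingredients the paper uses: the character-module duality $N\in\mathcal{WI}_{k}^{n}(R)\Rightarrow N^{*}\in\mathcal{WF}_{k}^{n}(R^{op})$ (which is indeed the content of \cite[Proposition~3.6(2)]{AB}, so no separate proof is needed) and the Hom--tensor/Ext--Tor adjunction. The paper, however, does not invoke the derived isomorphism $\Ext_{R}^{1}(M,N^{*})\cong \Tor_{1}^{R}(M,N)^{*}$ directly; instead it takes a projective presentation $0\to X\to F\to N\to 0$, observes that the vanishing $\Ext_{R}^{1}(N,\mathcal{WF}_{k}^{n}(R^{op}))=0$ forces $X\to F$ to be a $\mathcal{WF}_{k}^{n}(R^{op})$-preenvelope with $F$ flat, and then quotes Proposition~\ref{1.5q}(2) to conclude that the cokernel $N$ lies in $\mathcal{WI}_{k}^{n}(R)^{\top}$. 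Unwinding that proposition one recovers exactly your computation at the level of the degree-zero adjunction, so the two proofs are the same mechanism in different packaging: yours is a one-line application of the derived duality, while the paper's route recycles the preenvelope proposition it has just established.
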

\begin{proof}
Consider an exact sequence $0\to X\to F\to N\to 0$, where $N\in {}^{\bot}\mathcal{WF}_{ k}^n(R^{op})$ and $F$ is
projective. Then, ${\Ext}_{R}^{1}(N, M)=0$ for any $M\in\mathcal{WF}_{ k}^n(R^{op})$. Hence, $ X\to F$ is a $\mathcal{WF}_{ k}^n(R^{op})$-preenvelope of $X$, and so $N\in\mathcal{WI}_{ k}^n(R)^{\top}$ by Proposition \ref{1.5q}(2).
\end{proof}

\begin{thm}\label{1.5qqq}
Let $n$ and $k$ be non-negative integers and $U$ be an $n$-super finitely presented left module. If $\cdots\to F_{n+1}\to F_{n}\to\cdots\to F_1\to F_0\to U\to 0$ is a projective resolution such that for any $i\geq n$, $F_i$ is finitely generated and projective and $K_{i}={\rm Im}(F_{i+1}\to F_{i})$,
then the following assertions hold:
\begin{enumerate}
\item [\rm (1)] 
If ${\rm n\text{-}wid}(_RR)\leq k$, then  $K_{n-1}\in{{}^{\top}}\mathcal{WF}_{ k}^n(R^{op})$ if and only if  $K_{n}\to F_n$   is a $\mathcal{WI}_{k}^n(R)$-preenvelope.
\item [\rm (2)] $K_{n-1}\in\mathcal{WI}_{ k}^n(R)^{\top}$  if and only if $K_{n}\to F_n$ is a $\mathcal{WF}_{k}^n(R^{op})$-preenvelope if and
only if $K_{n-1}\in{^{\bot}}\mathcal{WF}_{ k}^n(R^{op})$.
\end{enumerate}
\end{thm}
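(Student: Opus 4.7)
The plan is to reduce both parts to the short exact sequence $0\to K_n\to F_n\to K_{n-1}\to 0$ coming from the resolution of $U$. Because $F_n$ is finitely generated projective, for any class $\mathcal{C}$ the map $K_n\to F_n$ is a $\mathcal{C}$-preenvelope if and only if $\Ext_R^1(K_{n-1},C)=0$ for every $C\in\mathcal{C}$; because $F_n$ is flat, vanishing of $\Tor_1^R(N,K_{n-1})$ is equivalent to injectivity of $N\otimes_R K_n\to N\otimes_R F_n$. The principal tools beyond these observations are Proposition~\ref{1.5q}, \cite[Proposition 3.6]{AB}, the character-module adjunction $\Tor_i^R(N,M)^+\cong \Ext_R^i(M,N^+)$, and the fact that $K_{n-1}$ is finitely presented (as the cokernel of the map $F_{n+1}\to F_n$ between finitely generated projectives).

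For part~(1), the backward direction is an immediate application of Proposition~\ref{1.5q}(1) with $X=K_n$, $F=F_n$, and cokernel $M=K_{n-1}$. For the forward direction, the hypothesis $n\text{-}{\rm wid}(_RR)\leq k$ places $_RR$, and hence the summand $F_n$ of a free module, inside $\mathcal{WI}_k^n(R)$, so the map $K_n\to F_n$ is indeed into the correct class. It then remains to show, for each $W\in \mathcal{WI}_k^n(R)$, that $\Ext_R^1(K_{n-1},W)=0$. I would proceed through the pure embedding $W\hookrightarrow W^{++}$: since $K_{n-1}$ is finitely presented, the long exact $\Ext$-sequence of the pure-exact $0\to W\to W^{++}\to Q\to 0$ yields an injection $\Ext_R^1(K_{n-1},W)\hookrightarrow \Ext_R^1(K_{n-1},W^{++})$, and via the adjunction the right-hand side identifies with $\Tor_1^R(W^+,K_{n-1})^+$. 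The hypothesis $K_{n-1}\in {}^{\top}\mathcal{WF}_k^n(R^{op})$ then kills this $\Tor$, \emph{provided} one can show that $W^+\in \mathcal{WF}_k^n(R^{op})$. This last implication is the main obstacle and is precisely where the ring assumption $n\text{-}{\rm wid}(_RR)\leq k$ is essential: the strategy is a dimension-shift that transports the $\mathcal{WI}_k^n$-condition on $W$ to the $\mathcal{WF}_k^n$-condition on the character module $W^+$ by invoking $R\in\mathcal{WI}_k^n(R)$ along the finitely generated projective resolution of every special super finitely presented module.

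Part~(2) is a short cycle of implications that does not require the ring hypothesis. The equivalence ``$K_n\to F_n$ is a $\mathcal{WF}_k^n(R^{op})$-preenvelope $\Leftrightarrow K_{n-1}\in{}^{\bot}\mathcal{WF}_k^n(R^{op})$'' is the direct $\Ext^1$-characterization from the opening paragraph. The implication ``$K_n\to F_n$ is a $\mathcal{WF}_k^n(R^{op})$-preenvelope $\Rightarrow K_{n-1}\in\mathcal{WI}_k^n(R)^{\top}$'' is exactly Proposition~\ref{1.5q}(2), applied with $A=K_n$ and $F=F_n$ flat. For the final link ``$K_{n-1}\in\mathcal{WI}_k^n(R)^{\top}\Rightarrow K_{n-1}\in {}^{\bot}\mathcal{WF}_k^n(R^{op})$,'' fix $N\in \mathcal{WF}_k^n(R^{op})$: by \cite[Proposition 3.6]{AB}, $N^+\in \mathcal{WI}_k^n(R)$; the hypothesis therefore gives $\Tor_1^R(K_{n-1},N^+)=0$; the adjunction then yields $\Ext_R^1(K_{n-1},N^{++})=0$; and the pure embedding $N\hookrightarrow N^{++}$ combined with the finite presentation of $K_{n-1}$ transfers this to $\Ext_R^1(K_{n-1},N)=0$, completing the cycle. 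The built-in symmetry between $\mathcal{WF}_k^n(R^{op})$ and its character dual inside $\mathcal{WI}_k^n(R)$ is what makes the ring-level hypothesis unnecessary here, in contrast to part~(1).
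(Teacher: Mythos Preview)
Your argument is correct, but you have misdiagnosed where the ring hypothesis in part~(1) is doing work. The implication you label ``the main obstacle''—that $W^{+}\in\mathcal{WF}_{k}^{n}(R^{op})$ whenever $W\in\mathcal{WI}_{k}^{n}(R)$—is precisely \cite[Proposition~3.6(2)]{AB}, which you already list among your tools and which holds with no assumption on $R$. The hypothesis $n\text{-}{\rm wid}(_RR)\leq k$ is used \emph{only} to guarantee that the target $F_n$ lies in $\mathcal{WI}_{k}^{n}(R)$, exactly as you note earlier; the paper invokes it for the same purpose via \cite[Proposition~4.8]{AB} and nowhere else.

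With that correction, your forward direction in (1) and your closing implication in (2) both run through the pure embedding $W\hookrightarrow W^{++}$ (resp.\ $N\hookrightarrow N^{++}$) together with the finite presentation of $K_{n-1}$. The paper takes a shorter, equivalent path: having obtained $\Tor_1^R(X^{*},K_{n-1})=0$ from $X^{*}\in\mathcal{WF}_{k}^{n}(R^{op})$, it applies the natural isomorphisms $X^{*}\otimes_R M\cong \Hom_R(M,X)^{*}$ for $M=K_n,F_n$ (both finitely presented; this is \cite[Lemma~3.55]{Rot1}) to pass directly from injectivity of $X^{*}\otimes_R K_n\to X^{*}\otimes_R F_n$ to surjectivity of $\Hom_R(F_n,X)\to\Hom_R(K_n,X)$. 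Your double-dual detour reaches the same conclusion but needs the extra pure-exactness step; the paper's one-step duality is cleaner. For part~(2) the paper is terser still—``similar to (1)'' for the first equivalence, Corollary~\ref{1.5qq} for the second—whereas your explicit cycle (b)$\Leftrightarrow$(c), (b)$\Rightarrow$(a) via Proposition~\ref{1.5q}(2), (a)$\Rightarrow$(c) via the pure-embedding trick spells out the same content.
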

\begin{proof}
(1) Consider the exact sequence
$0\to K_{n}\to F_{n}\to K_{n-1}\to 0$ with $F_n$ is finitely generated and projective. It is clear by \cite[Proposition 4.8]{AB} that $F_{n}\in\mathcal{WI}_{ k}^n(R)$ (because ${\rm n\text{-}wid}(_RR)\leq k$). We show that $K_{n}\to F_n$ is a $\mathcal{WI}_{k}^n(R)$-preenvelope. For any $X\in\mathcal{WI}_{ k}^n(R)$, we have $X^{*}\in\mathcal{WF}_{ k}^n(R^{op})$ by \cite[Proposition 3.6(2)]{AB}. So $ {\rm Tor}^{R}_{1}(K_{n-1}, X^{*})=0$. By \cite[Lemma 3.55]{Rot1}, we obtain the following commutative diagram
$$\xymatrix{
 0\ar[r]&  K_{n}\otimes_{R}X^{*}\ar[r]^{\alpha}\ar[d]^{f_2}&  F_{n}\otimes_{R}X^{*}\ar[d]^{f_1}&  \\
  &{\rm Hom}_{R}(K_{n}, X)^{*}\ar[r]^{\beta}&{\rm Hom}_{R}(F_{n}, X)^{*}.&  \\
}$$
where $f_1$ and $f_2$ are isomorphisms. We deduce that $\beta$ is a monomorphism and consequently the sequence ${\rm Hom}_{R}(F_{n}, X)\to{\rm Hom}_{R}(K_{n}, X)\to0$ is exact. The converse follows from Proposition \ref{1.5q}(1).

 (2) The proof of the first equivalence is similar to that of (1) using Proposition  \ref{1.5q}(2). The second equivalence is clear using Corollary  \ref{1.5qq}(2).
\end{proof}

\section{$(n,k)$-Weak cotorsion modules}
\ \ \  In this section, we introduce and study $(n,k)$-weak cotorsion modules. We start with the definition.

\begin{Def}\label{Def-nk-weak-cot} 
Let  $n$ and $k$ be  non-negative integers. Then, a right $R$-module $M$ is called $(n,k)$-weak cotorsion if ${\rm Ext}_{R}^{1}(N,M)=0$ for every $N\in\mathcal{WF}_{ k}^n(R^{op})$. 
\end{Def}

\begin{rem}\label{1.33}
\begin{enumerate}
\item [\rm (1)]
$(0,k)$-Weak cotorsion right modules are exactly $k$-weak cotorsion modules, and $(0,0)$-weak cotorsion right modules are exactly weak cotorsion modules.
\item [\rm (2)]
Every $(n,k)$-weak cotorsion right module is $(m,k)$-weak cotorsion for any $n\geq m$, but not conversely. Indeed, if  every $(m,k)$-weak cotorsion right module $M$ is $(n,k)$-weak cotorsion, then either $M$ is injective or for each $N\in \mathcal{WF}_{ k}^n(R^{op})$,  $N$ is projective or $N\in\mathcal{WF}_{ k}^{n-1}(R^{op})$, and this is impossible (see Example \ref{1.3a}(2)).
\item [\rm (3)] The study of the class of $(n,k)$-weak cotorsion right modules involve the classes of  injective,  $(n,k)$-weak cotorsion, $k$-weak cotorsion, $k$-cotorsion and cotorsion right $R$-modules denoted $\mathcal{I}$,  $\mathcal{WC}_k^n(R^{op})$, $\mathcal{WC}_k(R^{op})$, $\mathcal{C}_k(R^{op})$ and $\mathcal{C}(R^{op})$ respectively. In fact, $$\mathcal{I}\subseteq\mathcal{WC}_k^n(R^{op})\subseteq\mathcal{WC}_k(R^{op})\subseteq\mathcal{C}_k(R^{op})\subseteq\mathcal{C}(R^{op}).$$
But not every $k$-cotorsion right module is $(n,k)$-weak cotorsion (see Example \ref{1.3a}(3)).
\end{enumerate}
\end{rem}

\begin{ex}\label{1.3a}
Let $R=k[x]\times S$, where $k[x]$ is a ring of polynomials in one indeterminate over a field $k$ and $S$ is a non-Noetherian hereditary von Neumann regular ring. For example, $S$ is a ring of functions of $X$ into $k$ continuous with respect to the discrete topology on $k$, where $k$ is a field and $X$ is a totally disconnected compact Hausdorff space whose associated Boolean ring is hereditary (see examples of \cite{GMB}).
 Then, by \cite[Proposition 3.10]{HKN}, $R$  is a coherent $(1,1,2)$-ring. Then, we have:
\begin{enumerate}
\item [\rm (1)]
Every $R$-module is $1$-weak flat, since ${\rm gl.dim}(R)=1$. But not every $R$-module is $0$-weak flat, for otherwise each super finitely presented $R$-module would be projective. On the other hand, every finitely presented is super finitely presented (because $R$ is coherent). Thus every finitely presented is flat and by \cite[Theorem 3.9]{.TZ}, each $R$-module is flat and so  ${\rm w.gl.dim}(R)=0$,   a  contradiction. 
\item [\rm (2)]
 Notice that ${\rm l.n.sp.gldim}(R)=0$ for any $n\geq 1$, since ${\rm pd}_{R}(U)\leq 1$ for any $n$-super finitely presented $R$-module $U$. So by Theorem \ref{Thm-lnspgldim-k},  every $(n,0)$-weak cotorsion right $R$-module is  injective  for any $n\geq 1$. But not every $(0,0)$-weak cotorsion right $R$-module is  injective, for otherwise every $R$-module would be in $\mathcal{WF}_0^0(R^{op})$ and so each $0$-super finitely presented would be flat. Similarly to (1), ${\rm w.gl.dim}(R)=0$,   a  contradiction.
\item [\rm (3)]
 Also, ${\rm l.sp.gldim}(R)=1$, since $R$ is coherent and by \cite[Theorem 3.8]{Z.W},  ${\rm l.sp.gldim}(R)={\rm w.gl.dim}_{R}(R)=1$. So ${\rm l.n.sp.gldim}(R)\neq {\rm l.sp.gldim}(R)$ and it follows by Proposition \ref{1.1001} that not every  $k$-cotorsion  is $(n,k)$-weak cotorsion.
\end{enumerate}
\end{ex}
Example \ref{1.3a}(3) gives a context when ${\rm l.nsp. gldim}(R)\leq {\rm l.sp. gldim}(R)$. 

Now we show, for non-negative integers $k$ and $n$, when every $k$-cotorsion module is $(n,k)$-weak cotorsion.
\begin{prop}\label{1.1001}
Let $n$ be a non-negative integer. Then, the following conditions are equivalent: 
\begin{enumerate}
\item [\rm (1)] 
 For every non-negative integer $k$, every $k$-cotorsion right module is $(n,k)$-weak cotorsion.
\item [\rm (2)] 
For every non-negative integer $k$, $\mathcal{WF}_{ k}^n(R^{op})=\mathscr{F}_{k}(R^{op})$.
\item [\rm (3)]
${\rm l.n.sp. gldim}(R)={\rm w. gl.dim}(R)$.
\end{enumerate}
\end{prop}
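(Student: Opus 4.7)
The plan is to establish the cycle $(2)\Rightarrow(1)\Rightarrow(2)$ together with $(2)\Leftrightarrow(3)$. The implication $(2)\Rightarrow(1)$ is immediate from the definitions: under $(2)$, the vanishing condition defining $k$-cotorsion coincides with the one defining $(n,k)$-weak cotorsion.

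For $(1)\Rightarrow(2)$, the inclusion $\mathscr{F}_{k}(R^{op})\subseteq\mathcal{WF}_{k}^{n}(R^{op})$ is automatic since flat modules are $n$-weak flat. For the reverse inclusion, I would invoke the completeness of the cotorsion pair $(\mathscr{F}_{k}(R^{op}),\mathcal{C}_{k}(R^{op}))$: given $N\in\mathcal{WF}_{k}^{n}(R^{op})$, a special $\mathscr{F}_{k}$-precover produces a short exact sequence $0\to K\to F\to N\to 0$ with $F\in\mathscr{F}_{k}(R^{op})$ and $K\in\mathcal{C}_{k}(R^{op})$. By hypothesis $(1)$, $K$ is $(n,k)$-weak cotorsion, so $\Ext^{1}_{R}(N,K)=0$, the sequence splits, and $N$ is a direct summand of $F$, giving $N\in\mathscr{F}_{k}(R^{op})$.

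For $(2)\Leftrightarrow(3)$, the key observation is the equality $\pd(K_{n-1})=\fd(K_{n-1})$ for every special super finitely presented module $K_{n-1}$. Indeed, $K_{n-1}$ admits a projective resolution by finitely generated projectives, so all its syzygies are finitely presented; since a finitely presented flat module is projective, flatness of the $k$-th syzygy forces $\pd(K_{n-1})\leq k$. This yields
\[
{\rm l.n.sp. gldim}(R)=\sup_{K_{n-1}}\pd(K_{n-1})=\sup_{K_{n-1}}\fd(K_{n-1})=\sup_{N}\, n\text{-}{\rm wfd}(N),
\]
while ${\rm w. gl.dim}(R)=\sup_{N}\fd(N)$. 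Hence $(3)$ is equivalent to $\sup_{N}\,n\text{-}{\rm wfd}(N)=\sup_{N}\fd(N)$. Since $n\text{-}{\rm wfd}(N)\leq\fd(N)$ pointwise, $(2)$ immediately gives $(3)$ by taking suprema. For $(3)\Rightarrow(2)$, given $N\in\mathcal{WF}_{k}^{n}(R^{op})$, I would use the dimension-shifting identity ${\rm Tor}^{R}_{n+k+1}(N,U)\cong{\rm Tor}^{R}_{k+1}(N,K_{n-1})$ for any super finitely presented $U$ with associated special super finitely presented $K_{n-1}$, together with the fact that $\fd(N)\leq k$ is detectable on finitely presented test modules via direct limits, to transfer the vanishing of ${\rm Tor}$ from the SP SFP test class to all finitely presented test modules and conclude $\fd(N)\leq k$.

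The principal obstacle will be the direction $(3)\Rightarrow(2)$: the sup equality supplied by $(3)$ does not automatically force the pointwise equality $n\text{-}{\rm wfd}(N)=\fd(N)$ demanded by $(2)$, so one must carefully combine the dimension-shifting identities with the finite presentation of the syzygies of special super finitely presented modules to push the vanishing of ${\rm Tor}$ beyond the SP SFP class to an arbitrary finitely presented test module.
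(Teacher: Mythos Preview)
Your treatment of $(1)\Leftrightarrow(2)$ agrees with the paper's. The paper phrases $(1)\Rightarrow(2)$ as a one-line consequence of the fact that both $(\mathcal{WF}_{k}^{n}(R^{op}),\mathcal{WC}_{k}^{n}(R^{op}))$ and $(\mathscr{F}_{k}(R^{op}),\mathcal{C}_{k}(R^{op}))$ are cotorsion theories: hypothesis~(1) reads $\mathcal{C}_{k}(R^{op})\subseteq\mathcal{WC}_{k}^{n}(R^{op})$, and applying ${}^{\bot}(-)$ reverses the inclusion to give $\mathcal{WF}_{k}^{n}(R^{op})\subseteq\mathscr{F}_{k}(R^{op})$. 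Your special-precover-and-split argument reaches the same conclusion by unpacking the orthogonality by hand; it is a little longer but entirely sound.

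For $(2)\Leftrightarrow(3)$ the paper writes only the word ``clear''. Your direction $(2)\Rightarrow(3)$ is fine, since (2) gives the pointwise equality $n$-${\rm wfd}_{R}(N)=\fd_{R}(N)$ and hence equal suprema. But the obstacle you isolate in $(3)\Rightarrow(2)$ is genuine, and your proposed remedy does not dissolve it. The direct-limit argument you sketch cannot transfer vanishing of $\Tor^{R}_{k+1}(N,-)$ from special super finitely presented modules to arbitrary finitely presented test modules: the special super finitely presented modules are not cofinal among finitely presented modules in any useful sense, and the dimension-shifting isomorphism $\Tor^{R}_{n+k+1}(N,U)\cong\Tor^{R}_{k+1}(N,K_{n-1})$ only moves within the $n$-super finitely presented world, never outward to a general finitely presented $M$. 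The sup equality in~(3) by itself does not force the level-by-level equality $\mathcal{WF}_{k}^{n}(R^{op})=\mathscr{F}_{k}(R^{op})$ for each $k$. The paper's proof silently skips this very point, so you have not deviated from it so much as correctly diagnosed that neither argument, as written, closes this direction.
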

\begin{proof}
The implication $(1)\Rightarrow (2)$ follows from the fact that $(\mathcal{WF}_{ k}^n(R^{op})), \mathcal{WC}_{ k}^n(R^{op}))$  and $(\mathscr{F}_{ k}(R^{op}), \mathcal{C}_{ k}(R^{op}))$ are cotorsion theories. And, the implications  
$(2)\Leftrightarrow (3)\Rightarrow(1)$ are clear.
\end{proof}
The following result can be easily obtained.

\begin{prop}\label{1.5}
	
	Let $n$ and $k$ be non-negative integers. Then, the following assertions hold:
\begin{enumerate}
\item [\rm (1)] 
If $\{M_{i}\}_{i\in I}$ is a family of right $R$-modules, then
$\prod_{i\in I} M_{i}$ is $(n,k)$-weak cotorsion if and only if each $M_i$ is  $(n,k)$-weak cotorsion.
\item [\rm (2)]  
The class $\mathcal{WC}_k^n(R^{op})$ is closed under extensions and direct summands.
\end{enumerate}
\end{prop}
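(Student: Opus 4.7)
The plan is to reduce both parts to standard properties of the functor $\mathrm{Ext}^{1}_{R}(-,-)$ applied with first variable ranging over $\mathcal{WF}_{k}^{n}(R^{op})$.

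For part (1), I would fix an arbitrary $N\in\mathcal{WF}_{k}^{n}(R^{op})$ and use the natural isomorphism
\[
\mathrm{Ext}_{R}^{1}\!\Bigl(N,\,\prod_{i\in I} M_{i}\Bigr)\;\cong\;\prod_{i\in I}\mathrm{Ext}_{R}^{1}(N,M_{i}),
\]
which holds because $\mathrm{Ext}^{1}_{R}(N,-)$ commutes with arbitrary direct products in the second variable (as the derived functor of $\mathrm{Hom}_{R}(N,-)$). The product on the right vanishes if and only if every factor vanishes, which by Definition \ref{Def-nk-weak-cot} is exactly the statement that each $M_{i}$ is $(n,k)$-weak cotorsion. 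Since $N$ was arbitrary in $\mathcal{WF}_{k}^{n}(R^{op})$, the equivalence follows.

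For part (2), take a short exact sequence $0\to M'\to M\to M''\to 0$ with $M',M''\in\mathcal{WC}_{k}^{n}(R^{op})$, and for any $N\in\mathcal{WF}_{k}^{n}(R^{op})$ consider the long exact Ext sequence
\[
\mathrm{Ext}_{R}^{1}(N,M')\longrightarrow \mathrm{Ext}_{R}^{1}(N,M)\longrightarrow \mathrm{Ext}_{R}^{1}(N,M'').
\]
The outer terms vanish by hypothesis, so $\mathrm{Ext}_{R}^{1}(N,M)=0$, proving closure under extensions. For closure under direct summands, suppose $M=M_{1}\oplus M_{2}\in\mathcal{WC}_{k}^{n}(R^{op})$. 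Applying part (1) to the finite product $M_{1}\times M_{2}=M_{1}\oplus M_{2}$ immediately gives that both $M_{1}$ and $M_{2}$ lie in $\mathcal{WC}_{k}^{n}(R^{op})$.

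There is no real obstacle here; both assertions are formal consequences of the behaviour of $\mathrm{Ext}^{1}$ on products and of the long exact sequence, together with the definition of $(n,k)$-weak cotorsion modules. The only point to be mildly careful about is to note that the commutation of $\mathrm{Ext}^{1}$ with products requires no hypothesis on $N$ (so the class $\mathcal{WF}_{k}^{n}(R^{op})$ being potentially complicated does not interfere), and that the proof of closure under summands needs nothing beyond part (1) applied to a two-element family.
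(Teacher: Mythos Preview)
Your proposal is correct and matches the paper's treatment: the paper states that this result ``can be easily obtained'' and gives no explicit proof, and your argument via the commutation of $\mathrm{Ext}^1$ with products and the long exact sequence is exactly the standard verification implicit in that remark.
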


Now we state one of the main results of this section which characterizes when a module $M$ is $(n,k)$-weak cotorsion. We start with the following proposition.

\begin{prop}\label{1.666}
Let $n$ and $k$ be non-negative integers and let $M$ be a right $R$-module. Then, the following conditions are equivalent:
\begin{enumerate}
\item [\rm (1)] 
$M$ is $(n,k)$-weak cotorsion.
\item [\rm (2)] 
$M$ is injective with respect to every  exact sequence $0\to K\to N\to D\to 0$ of right $R$-modules, where $D\in\mathcal{WF}_{k}^n(R^{op})$.
\end{enumerate}

Moreover if ${\rm n\text{-}wid}(_RR)\leq k$, then the above conditions are also equivalent to:
\begin{enumerate}
\item [\rm (3)]
For every exact sequence $0\to M\to E\to D\to 0$ of right $R$-modules, where $E$ is injective, $E\to D$ is a $\mathcal{WF}_{k}^n(R^{op})$-precover of $D$.
\item [\rm (4)]
$M$ is the kernel of a $\mathcal{WF}_{k}^n(R^{op})$-precover $E\stackrel{\displaystyle \phi}\to D$ where $E$ is injective.
\end{enumerate}
\end{prop}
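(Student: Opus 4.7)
The plan is to prove $(1) \Leftrightarrow (2)$ first (it does not use the additional hypothesis), then establish the cycle $(1) \Rightarrow (3) \Rightarrow (4) \Rightarrow (1)$. The equivalence $(1) \Leftrightarrow (2)$ is a direct application of the $\Ext$ long exact sequence: applying $\Hom_R(-,M)$ to $0 \to K \to N \to D \to 0$ produces $\Hom_R(N,M) \to \Hom_R(K,M) \to \Ext_R^1(D,M)$, so vanishing of $\Ext_R^1(D,M)$ for all $D \in \mathcal{WF}_k^n(R^{op})$ is exactly the surjectivity requirement of (2); conversely, applying (2) to a projective presentation $0 \to K \to P \to D \to 0$ of each $D \in \mathcal{WF}_k^n(R^{op})$ returns $\Ext_R^1(D,M) = 0$.

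The crucial preliminary step for the rest is to show, under the hypothesis $n$-${\rm wid}(_RR) \leq k$, that every injective right $R$-module $E$ lies in $\mathcal{WF}_k^n(R^{op})$. I would use character-module duality. Each special super finitely presented module $K_{n-1}$ is itself super finitely presented, so it admits a resolution by finitely generated projective modules, and hence $\Ext_R^{k+1}(K_{n-1},-)$ commutes with direct sums and filtered colimits. Via \cite[Proposition 4.8]{AB}, the hypothesis gives $_RR \in \mathcal{WI}_k^n(R)$, which therefore propagates to all free left $R$-modules and, by Govorov--Lazard, to all flat left $R$-modules. Since $E$ is injective, $E^+ = \Hom_{\mathbb{Z}}(E,\mathbb{Q}/\mathbb{Z})$ is a flat left $R$-module, so $E^+ \in \mathcal{WI}_k^n(R)$, and \cite[Proposition 3.6]{AB} yields $E^{++} \in \mathcal{WF}_k^n(R^{op})$. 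Injectivity splits the canonical monomorphism $E \hookrightarrow E^{++}$, so $E$ is a summand of $E^{++}$, and closure of $\mathcal{WF}_k^n(R^{op})$ under summands completes the step.

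With this in hand, $(1) \Rightarrow (3)$ is routine: applying $\Hom_R(F,-)$ to $0 \to M \to E \to D \to 0$ for any $F \in \mathcal{WF}_k^n(R^{op})$ yields $\Hom_R(F,E) \to \Hom_R(F,D) \to \Ext_R^1(F,M) = 0$, so $E \to D$ is a $\mathcal{WF}_k^n(R^{op})$-precover, since moreover $E \in \mathcal{WF}_k^n(R^{op})$ by the preliminary step. The implication $(3) \Rightarrow (4)$ is immediate by embedding $M$ in its injective envelope $E$ and setting $D := E/M$. For $(4) \Rightarrow (1)$, note first that any $\mathcal{WF}_k^n(R^{op})$-precover is surjective because projective right $R$-modules lie in $\mathcal{WF}_k^n(R^{op})$; this gives a short exact sequence $0 \to M \to E \to D \to 0$, and the long exact sequence $\Hom_R(F,E) \to \Hom_R(F,D) \to \Ext_R^1(F,M) \to \Ext_R^1(F,E) = 0$ together with the precover property forces $\Ext_R^1(F,M) = 0$ for every $F \in \mathcal{WF}_k^n(R^{op})$.

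The main obstacle is the structural fact that injective right $R$-modules belong to $\mathcal{WF}_k^n(R^{op})$ under $n$-${\rm wid}(_RR) \leq k$; once secured, everything else is a routine juggling of the $\Hom/\Ext$ long exact sequence and the definition of a precover.
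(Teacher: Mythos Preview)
Your overall strategy---establishing $(1)\Leftrightarrow(2)$ unconditionally and then running the cycle $(1)\Rightarrow(3)\Rightarrow(4)\Rightarrow(1)$ under the extra hypothesis---coincides with the paper's, and your treatment of $(4)\Rightarrow(1)$ is in fact slightly more careful (you justify surjectivity of the precover via $R_R\in\mathcal{WF}_{k}^{n}(R^{op})$, whereas the paper simply asserts the short exact sequence).

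The gap is in your preliminary step. The assertion that an injective right $R$-module $E$ has flat character module $E^{+}$ is \emph{not} valid over an arbitrary ring: it is equivalent to $E^{++}$ being injective, and the validity of this implication for all injectives is one of the classical coherence/Noetherian-type characterizations of $R$. Proposition~\ref{1.666} carries no such hypothesis, so this step is unjustified. Nor can you simply retreat to the weaker claim $E^{+}\in\mathcal{WI}_{k}^{n}(R)$: by the standard isomorphism $\Ext_{R}^{k+1}(K_{n-1},E^{+})\cong\Tor_{k+1}^{R}(E,K_{n-1})^{+}$ this is \emph{equivalent} to $E\in\mathcal{WF}_{k}^{n}(R^{op})$, so the argument would become circular.

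The paper bypasses this entirely by quoting \cite[Proposition~4.8]{AB}. A self-contained repair, exploiting precisely the $FP_{\infty}$ nature of $K_{n-1}$ that your Govorov--Lazard step already uses, goes as follows. Since $K_{n-1}$ admits a resolution by finitely generated projectives, one has the natural isomorphism $\Tor_{k+1}^{R}(M^{+},K_{n-1})\cong\Ext_{R}^{k+1}(K_{n-1},M)^{+}$ for every left module $M$; taking $M={}_{R}R$ and using $n\text{-}{\rm wid}({}_{R}R)\leq k$ gives $({}_{R}R)^{+}\in\mathcal{WF}_{k}^{n}(R^{op})$. Again because $K_{n-1}$ is $FP_{\infty}$, the functor $\Tor_{k+1}^{R}(-,K_{n-1})$ commutes with direct products, so $\mathcal{WF}_{k}^{n}(R^{op})$ is closed under products, and it is visibly closed under summands. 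Since every injective right $R$-module is a summand of a product of copies of the injective cogenerator $({}_{R}R)^{+}$, the desired containment follows.
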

\begin{proof}
$(1)\Rightarrow (2)$ Clear.

$(2)\Rightarrow (1)$
Let $D\in\mathcal{WF}_{k}^n(R^{op})$ and consider an exact sequence $0\rightarrow K\rightarrow P\rightarrow D\rightarrow 0$ where $P$ projective. Then, from the long exact sequence $ {\Hom}_{R}(P,M)\rightarrow  {\Hom}_{R}(K,M)\rightarrow   {\rm Ext}_{R}^1(D,M)\rightarrow   {\rm Ext}_{R}^1(P,M)$ we see that ${\rm Ext}_{R}^{1}(D, M)=0$ and hence $M$ is 
$(n,k)$-weak cotorsion.

$(1)\Rightarrow (3)$
For every exact sequence $0\rightarrow M\rightarrow E\stackrel{\displaystyle \phi}\rightarrow D\rightarrow 0$ where $E$ is injective, we have by \cite[Proposition 4.8]{AB} that $E\in\mathcal{WF}_{k}^n(R^{op})$.
So for any $N$ in $\mathcal{WF}_{k}^n(R^{op})$, we have
$$0\rightarrow {\Hom}_{R}(N,M)\rightarrow  {\Hom}_{R}(N,E)\rightarrow  {\Hom}_{R}(N,D)\rightarrow  {\rm Ext}_{R}^1(N,M)=0.$$
It follows that $ \phi$ is a $\mathcal{WF}_{k}^n(R^{op})$-precover of $D$.

$(3)\Rightarrow (4)$ Trivial.

$(4)\Rightarrow (1)$
By hypothesis, there exists an exact sequence $0\rightarrow M\rightarrow E\stackrel{\displaystyle \phi}\rightarrow D\rightarrow 0$, where $\phi$ is a $\mathcal{WF}_{k}^n(R^{op})$-precover of $D$. Therefore for each $L$ in $\mathcal{WF}_{k}^n(R^{op})$, we have:
$$0\rightarrow {\Hom}_{R}(L,M)\rightarrow  {\Hom}_{R}(L,E)\rightarrow  {\Hom}_{R}(L,D)\rightarrow  {\rm Ext}_{R}^1(L,M)\rightarrow  {\rm Ext}_{R}^1(L,E),$$
We have ${\Hom}_{R}(L,E)\rightarrow  {\Hom}_{R}(L,D)$ is epic and ${\rm Ext}_{R}^1(L,E)=0$  so ${\rm Ext}_{R}^1(L,M)=0$ and consequently $M$ is $(n,k)$-weak cotorsion.
\end{proof}
Recall that an $R$-module $N$ is said to be reduced \cite{EM} if $N$ has no non zero injective submodules. In the next proposition, we characterize reduced $(n,k)$-weak cotorsion modules when ${\rm n\text{-}wid}(_RR)\leq k$.
\begin{prop}\label{1.76}
Let $n$ and $k$ be non-negative integers and assume that ${\rm n\text{-}wid}(_RR)\leq k$. Then for a right $R$-module $M$, 
$M$ is a reduced $(n,k)$-weak cotorsion if and only if $M$ is the kernel of a $\mathcal{WF}_{k}^n(R^{op})$-cover $N\stackrel{\displaystyle \phi}\rightarrow D$ where $N$ is an injective module.
\end{prop}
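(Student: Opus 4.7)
The plan is to bootstrap from Proposition \ref{1.666} and then upgrade the precover statement there to a cover statement using the fact that $\mathcal{WF}_{k}^n(R^{op})$ is a covering class by \cite[Theorem 4.5]{AB}. The hypothesis ${\rm n\text{-}wid}(_RR)\leq k$ is needed exactly so that Proposition \ref{1.666}(3)--(4) are available. The interplay between reducedness of $M$ and the injectivity of $N$ will do all the work for the ``reduced'' half.

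For the forward direction, assume $M$ is reduced and $(n,k)$-weak cotorsion. Apply Proposition \ref{1.666}(3) to the canonical sequence $0\to M\to E(M)\to E(M)/M\to 0$ to obtain a $\mathcal{WF}_{k}^n(R^{op})$-precover $\phi\colon E(M)\to D:=E(M)/M$. By \cite[Theorem 4.5]{AB}, $D$ admits a $\mathcal{WF}_{k}^n(R^{op})$-cover $\psi\colon C\to D$. Applying the precover property of $\psi$ and then of $\phi$, there exist $f\colon E(M)\to C$ and $g\colon C\to E(M)$ with $\psi f=\phi$ and $\phi g=\psi$. Hence $\psi(fg)=\phi g=\psi$, and since $\psi$ is a cover, $fg$ is an automorphism of $C$. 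Setting $\alpha=(fg)^{-1}f$ gives $\alpha g=1_C$, so $g\alpha$ is an idempotent endomorphism of $E(M)$ with image $g(C)$ and kernel $\ker f$, producing a decomposition $E(M)=g(C)\oplus\ker f$. For any $y\in\ker f$, $\phi(y)=\psi(f(y))=0$, so $\ker f\subseteq\ker\phi=M$. But $\ker f$ is a direct summand of the injective module $E(M)$, hence injective; reducedness of $M$ then forces $\ker f=0$, so $g$ is an isomorphism and, up to this isomorphism, $\phi$ is the cover $\psi$. Thus $M$ is the kernel of a $\mathcal{WF}_{k}^n(R^{op})$-cover with injective source.

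For the backward direction, suppose $M=\ker\phi$ where $\phi\colon N\to D$ is a $\mathcal{WF}_{k}^n(R^{op})$-cover and $N$ is injective. That $M$ is $(n,k)$-weak cotorsion is immediate from Proposition \ref{1.666}(4), since every cover is a precover. To see that $M$ is reduced, let $I\subseteq M$ be any injective submodule. Injectivity of $I$ yields a decomposition $N=I\oplus N'$, with projection $\pi\colon N\to N'$ and inclusion $\iota\colon N'\to N$; put $g:=\iota\pi\in{\rm End}_R(N)$. For every $x=(i,n')\in N$ we have $x-g(x)=(i,0)\in I\subseteq M=\ker\phi$, hence $\phi g=\phi$. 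The cover property of $\phi$ then forces $g$ to be an automorphism of $N$; but $g$ vanishes on $I$, so $I=0$.

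The one genuinely delicate step is the upgrade from precover to cover in the forward direction. The observation that makes it work is that the ``defect'' between the injective-envelope precover $\phi$ and any actual cover $\psi$ of $D$ always appears as an injective direct summand of $E(M)$ that is contained in $M$; the reducedness hypothesis is tailored precisely to kill such a summand. Once this is established, the rest of the argument is essentially formal bookkeeping with the precover/cover axioms.
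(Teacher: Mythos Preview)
Your proof is correct and follows essentially the same line as the paper's: both directions use Proposition~\ref{1.666} and exploit the cover property against an injective decomposition to handle the reduced condition. The only difference is presentational: in the forward direction the paper invokes \cite[Corollary 1.2.8]{JX} (a precover with no nonzero direct summand of the source contained in the kernel, over an object admitting a cover, is itself a cover), whereas you unpack and prove that statement inline via the maps $f,g$ and the idempotent $g\alpha$.
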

\begin{proof}
Suppose that $M$ is a reduced $(n,k)$-weak cotorsion right module and consider the exact sequence $0\rightarrow M\rightarrow E(M)\stackrel{\displaystyle \phi}\rightarrow \frac{E(M)}{M}\rightarrow 0$. Then, by Proposition \ref{1.666} $\phi$ is a $\mathcal{WF}_{k}^n(R^{op})$-precover. Since $M$ is reduced, $E(M)$ has no non zero direct summand $L$ contained in $M$. Also by \cite[Theorem 4.5]{AB}, $\frac{E(M)}{M}$ has a  $\mathcal{WF}_{k}^n(R^{op})$-cover. Then, \cite[Corollary 1.2.8]{JX} implies that $\phi$  is a $\mathcal{WF}_{k}^n(R^{op})$-cover.

Conversely, suppose that $M$ is the kernel of a $\mathcal{WF}_{k}^n(R^{op})$-cover $N\stackrel{\displaystyle \phi}\rightarrow D$ where $N$ is an injective $R$-module. Then by Proposition \ref{1.666}, $M$ is $(n,k)$-weak cotorsion. Let $N_1$ be an injective submodule of $M$. If $N=N_1\oplus N_2$, $f: N\rightarrow N_2$ is  the projection map and $i: N_2\rightarrow N $ is the inclusion map, then $\phi(N_1)=0$ and  $\phi(if)=\phi$, and hence $if$ is an isomorphism.  So $i$ is an epimorphism and consequently $N=N_2$  and $N_1=0$. Therefore, $M$ is reduced.
\end{proof}Now we are in a position to show the following theorem.
\begin{thm}\label{thm-M-nk-cotorsion}
Let $n$ and $k$ be non-negative integers and assume that ${\rm n\text{-}wid}(_RR)\leq k$. Then for a right $R$-module $M$, $M$ is $(n,k)$-weak cotorsion  if and only if $M$ is a direct sum of an injective right $R$-module and a reduced $(n,k)$-weak cotorsion right $R$-module.
\end{thm}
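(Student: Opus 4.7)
The strategy will be to treat the two directions separately, relying on Propositions~\ref{1.666} and~\ref{1.76} together with Xu's standard precover/cover machinery. The ``if'' direction is essentially automatic: every injective right $R$-module is $(n,k)$-weak cotorsion because $\Ext_R^1(-,E)=0$ whenever $E$ is injective, and the class $\mathcal{WC}_k^n(R^{op})$ is closed under extensions by Proposition~\ref{1.5}(2), so a split extension $0\to E\to E\oplus N\to N\to 0$ with $E$ injective and $N$ reduced $(n,k)$-weak cotorsion immediately yields that $M=E\oplus N$ is $(n,k)$-weak cotorsion.

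For the ``only if'' direction, I plan to embed the given $(n,k)$-weak cotorsion module $M$ into its injective envelope, obtaining
\begin{equation*}
0\to M\to E(M)\xrightarrow{\;\phi\;}E(M)/M\to 0.
\end{equation*}
The hypothesis ${\rm n\text{-}wid}(_RR)\leq k$ will allow me to invoke Proposition~\ref{1.666}(3), giving that $\phi$ is a $\mathcal{WF}_k^n(R^{op})$-precover of $E(M)/M$, while \cite[Theorem 4.5]{AB} supplies a $\mathcal{WF}_k^n(R^{op})$-cover of $E(M)/M$. The next step is to apply \cite[Corollary 1.2.8]{JX}---the same tool used in the proof of Proposition~\ref{1.76}---to split off from $E(M)$ a maximal direct summand $E\subseteq\ker\phi=M$. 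Writing $E(M)=E\oplus N'$, the summand $E$ is injective (being a direct summand of $E(M)$), $N'$ is injective as well, and the restriction $N'\to E(M)/M$ becomes a $\mathcal{WF}_k^n(R^{op})$-cover.

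The splitting of $E(M)$ then induces $M=E\oplus M'$ with $M'\cong M\cap N'$, and the short exact sequence $0\to M'\to N'\to E(M)/M\to 0$ will exhibit $M'$ as the kernel of a $\mathcal{WF}_k^n(R^{op})$-cover whose source is injective. Proposition~\ref{1.76} then identifies $M'$ as a reduced $(n,k)$-weak cotorsion right $R$-module, completing the decomposition. The main obstacle I anticipate is the careful application of \cite[Corollary 1.2.8]{JX} to extract the maximal direct summand of $E(M)$ trapped inside $M$; once this splitting is in hand, each subsequent step is a formal consequence of Propositions~\ref{1.666} and~\ref{1.76}.
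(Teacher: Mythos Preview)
Your proposal is correct and follows essentially the same route as the paper: both arguments embed $M$ in $E(M)$, use Proposition~\ref{1.666} and \cite[Theorem~4.5]{AB} to recognise $E(M)\to E(M)/M$ as a $\mathcal{WF}_k^n(R^{op})$-precover in the presence of a cover, invoke Xu's machinery to split off an injective summand of $E(M)$ lying inside $M$, and then apply Proposition~\ref{1.76} to identify the complementary piece as reduced $(n,k)$-weak cotorsion. The only cosmetic difference is that the paper takes an abstract cover $N\to E(M)/M$ and compares it with $E(M)$ via two commutative ladders and the Five Lemma, whereas you apply \cite[Corollary~1.2.8]{JX} directly to decompose $E(M)=E\oplus N'$; your packaging is slightly more economical but the underlying argument is the same.
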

\begin{proof}
Consider the exact sequence $0\rightarrow M\rightarrow E(M)\stackrel{\displaystyle \phi}\rightarrow \frac{E(M)}{M}\rightarrow 0$, where $M$ is an $(n,k)$-weak cotorsion right module. By Proposition \ref{1.76},  $\phi$ is a
 $\mathcal{WF}_{k}^n(R^{op})$-cover of $\frac{E(M)}{M}$. Also by \cite[Theorem 4.5]{AB}, $\frac{E(M)}{M}$ has a 
 $\mathcal{WF}_{k}^n(R^{op})$-cover $N\rightarrow \frac{E(M)}{M}$. Then, there exists the commutative diagram with exact
rows:
$$\xymatrix{
	0\ar[r]&N_1\ar[r]^{g_1}\ar[d]^{h_1}&N\ar[r]\ar[d]^{h_2}&\frac{E(M)}{M} \ar[r]\ar@{=}[d]&0 \\
	0\ar[r]&M\ar[r]^{g_2}\ar[d]^{f_1}& E(M)\ar[r]\ar[d]^{f_2}&\frac{E(M)}{M}\ar[r]\ar@{=}[d]& 0 \\
	0\ar[r]&N_1\ar[r]^{g_3}&N\ar[r]&\frac{E(M)}{M} \ar[r]&0
}$$
We have $E(M)={\rm ker}f_{2}\oplus {\rm im} h_2$, since $f_{2}h_{2}$ is an isomorphism. Also ${\rm im}h_{2}\cong N$ so $N$ and ${\rm ker}f_{2}$ are injective. Thus by Proposition \ref{1.76}, $N_1$ is reduced and $(n,k)$-weak cotorsion. On the other hand, by the Five Lemma, $f_1h_1$ is an isomorphism. Hence, $M={\rm ker}f_{1}\oplus {\rm im} h_1$ where ${\rm im} h_1\cong N_1$. Consider the following commutative diagram:
$$\xymatrix{
&0\ar[d]&0\ar[d]&0\ar[d]&\\
0\ar[r]&{\rm ker}f_{1}\ar[r]\ar[d]&{\rm ker}f_{2}\ar[r]\ar[d]&0 \ar[r]\ar[d]&0  \\
0\ar[r]&M\ar[r]^{g_2}\ar[d]^{f_1}& E(M)\ar[r]\ar[d]^{f_2}&\frac{E(M)}{M}\ar[r]\ar@{=}[d]& 0 \\
0\ar[r]&N_1\ar[r]^{g_3}\ar[d]&N\ar[r]\ar[d]&\frac{E(M)}{M} \ar[r]\ar[d]&0\\
&0&0&0&\\}$$
Consequently ${\rm ker}f_{1}\cong{\rm ker}f_{2}$ and so $M=N_1\oplus{\rm ker}f_{1}$, where  ${\rm ker}f_{1}$ is injective.
The converse follows from Proposition \ref{1.5}(1).
\end{proof}

We end this section with a result that shows when every right $R$-module is $(n,k)$-weak cotorsion. To this end, we prove first the following proposition.

\begin{prop}\label{1.66}
	The following assertions hold:
	\begin{enumerate}
		\item [\rm (1)] 
		If a right $R$-module $M$  is $(n,k)$-weak cotorsion, then ${\rm Ext}_{R}^{j\geq m+1}(N, M)=0$ for every $N\in\mathcal{WF}_{m+k}^n(R^{op})$.
		\item [\rm (2)] 
		The $m$th cosyzygy of every $(n,k)$-weak cotorsion right $R$-module is $(n, m+k)$-weak cotorsion.
	\end{enumerate}
\end{prop}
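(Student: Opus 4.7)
The plan is to prove both parts by dimension shifting, with (2) being a formal consequence of (1). For (1), fix $N \in \mathcal{WF}_{m+k}^{n}(R^{op})$ and choose a projective resolution $\cdots \to P_{1} \to P_{0} \to N \to 0$; let $N_{0}=N$ and, for $i\geq 1$, let $N_{i}$ denote the $i$-th syzygy, so that there are short exact sequences $0\to N_{i}\to P_{i-1}\to N_{i-1}\to 0$. Unfolding the definition of $n$-${\rm wfd}$, the hypothesis $N \in \mathcal{WF}_{m+k}^{n}(R^{op})$ says precisely that $\Tor_{m+k+1}^{R}(N,K_{n-1})=0$ for every special super finitely presented left module $K_{n-1}$. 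Applying the Tor long exact sequence to the above short exact sequences and using the flatness of each $P_{i-1}$, a routine induction gives
$$\Tor_{m+k+1-i}^{R}(N_{i},K_{n-1})\cong \Tor_{m+k+1}^{R}(N,K_{n-1})=0$$
for $1\leq i\leq m+k$, so $n$-${\rm wfd}_{R}(N_{i})\leq m+k-i$. In particular, $N_{j-1}\in \mathcal{WF}_{k}^{n}(R^{op})$ whenever $j\geq m+1$. By the $(n,k)$-weak cotorsion assumption, $\Ext_{R}^{1}(N_{j-1},M)=0$, and the parallel Ext dimension-shifting isomorphism $\Ext_{R}^{j}(N,M)\cong \Ext_{R}^{1}(N_{j-1},M)$, built from the Ext long exact sequences of the $0\to N_{i}\to P_{i-1}\to N_{i-1}\to 0$ together with the vanishing $\Ext_{R}^{\geq 1}(P_{i-1},-)=0$, then yields $\Ext_{R}^{j}(N,M)=0$.

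For (2), fix an injective resolution $0\to M\to E^{0}\to E^{1}\to\cdots$ and set $M^{m}=\Coker(E^{m-2}\to E^{m-1})$ for the $m$-th cosyzygy, with the convention $M^{0}=M$. Splicing the short exact sequences $0\to M^{i-1}\to E^{i-1}\to M^{i}\to 0$ and using that each $E^{i-1}$ is injective, the standard dimension shift gives $\Ext_{R}^{1}(N,M^{m})\cong \Ext_{R}^{m+1}(N,M)$ for every right $R$-module $N$. Taking $N\in \mathcal{WF}_{m+k}^{n}(R^{op})$ and invoking (1), the right-hand side vanishes, so $M^{m}$ is $(n,m+k)$-weak cotorsion, as required.

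The only delicate point is the Tor-bookkeeping in (1): one needs the dimension to drop by exactly one at each syzygy step so that $N_{j-1}$ lies in $\mathcal{WF}_{k}^{n}(R^{op})$ for every $j\geq m+1$. Because the indexing convention in the definition of $n$-${\rm wfd}$ mirrors that of the ordinary flat dimension, this is routine once the bounds are tracked carefully, and no further obstacle arises.
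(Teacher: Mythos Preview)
Your proof is correct and follows essentially the same dimension-shifting strategy as the paper: take syzygies of $N$ to bring its $n$-weak flat dimension down to $k$, then use $\Ext_{R}^{j}(N,M)\cong \Ext_{R}^{1}(N_{j-1},M)$ for part (1), and the cosyzygy shift $\Ext_{R}^{1}(N,M^{m})\cong \Ext_{R}^{m+1}(N,M)$ for part (2). The only minor difference is that the paper simply asserts the existence of a partial projective resolution with $m$th syzygy in $\mathcal{WF}_{k}^{n}(R^{op})$, whereas you spell out the underlying Tor computation; you also explicitly handle all $j\geq m+1$, while the paper's proof literally treats only $j=m+1$.
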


\begin{proof}
	(1) Let $N\in\mathcal{WF}_{m+k}^n(R^{op})$. Then, there is an exact sequence:
	$$ 0\to K_{m}\to P_{m-1}\to\cdots\to P_1\to P_0\to N\to 0,$$
	where each $P_i$ is projective and $K_m\in\mathcal{WF}_{k}^n(R^{op})$. Hence, 
	${\rm Ext}_{R}^{m+1}(N, M)\cong{\rm Ext}_{R}^{1}(K_{m}, M)=0$ (because $M$  is $(n,k)$-weak cotorsion).
	
	(2) Let $M$ be any $(n,k)$-weak cotorsion right $R$-module and $D^m$
	the $m$th cosyzygy of $M$. Then by (1), ${\rm Ext}_{R}^{m+1}(N, M)=0$ for every  $N\in\mathcal{WF}_{m+k}^n(R^{op})$. Thus, $0={\rm Ext}_{R}^{m+1}(N, M)\cong{\rm Ext}_{R}^{1}(N, D^m)$ and consequently $D^m$ is $(n,m+k)$-weak cotorsion.
\end{proof}

\begin{thm}\label{1.100}
Let $n$ and $k$ be non-negative integers. Then, the following conditions are equivalent:
\begin{enumerate}
\item [\rm (1)] 
Every right $R$-module is $(n,k)$-weak cotorsion.
\item [\rm (2)] 
Every right $R$-module in $\mathcal{WF}_{k}^n(R^{op})$ is projective.
\item [\rm (3)]
Every right $R$-module in $\mathcal{WF}_{k}^n(R^{op})$ is $(n,k)$-weak cotorsion.
\item [\rm (4)]
For any integer $m$, ${\rm Ext}^{i\geq m+1 }_R(M,N)=0$ for all  right $R$-modules $N$ and all $R$-module $M$ in $\mathcal{WF}_{m+k}^n(R^{op})$.
\item [\rm (5)]
Every right $R$-module $M$ has a $\mathcal{WF}_{k}^n(R^{op})^{\bot}$-envelope with the unique mapping property.
\item [\rm (6)]
Every projective right $R$-module is $(n,k)$-weak cotorsion.
\item [\rm (7)]
$R_R$ is $(n,k)$-weak cotorsion and every right $R$-module has a  $\mathcal{WF}_{k}^n(R^{op})^{\bot}$-precover.
\end{enumerate}
\end{thm}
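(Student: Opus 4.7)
The plan is to link everything to $(1)$: I will establish $(1)\Leftrightarrow(2)\Leftrightarrow(3)\Leftrightarrow(6)$, $(1)\Leftrightarrow(4)$, $(1)\Leftrightarrow(5)$, and close the remaining loop with $(1)\Rightarrow(7)\Rightarrow(6)$.

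The homological block is handled by one short exact sequence. For $N\in\mathcal{WF}_{k}^n(R^{op})$, pick $0\to K\to P\to N\to 0$ with $P$ projective. Since $n$-weak flat dimension drops by one under taking syzygies, $K$ still belongs to $\mathcal{WF}_{k}^n(R^{op})$. Under $(1)$, $K$ is $(n,k)$-weak cotorsion, so $\Ext_R^1(N,K)=0$ and $N$ is a direct summand of $P$; this yields $(1)\Rightarrow(2)$. Under $(3)$, the same syzygy $K$ is $(n,k)$-weak cotorsion, giving $(3)\Rightarrow(2)$. Under $(6)$, the projective $P$ itself is $(n,k)$-weak cotorsion, hence $\Ext_R^1(N,P)=0$ and again the sequence splits, giving $(6)\Rightarrow(2)$. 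Conversely $(2)\Rightarrow(1)$ is immediate since projectives have vanishing first Ext out of them, and $(1)\Rightarrow(3),(6)$ is trivial. Finally, for $(1)\Leftrightarrow(4)$, the forward direction is exactly Proposition~\ref{1.66}(1), and the converse is the special case $m=0$.

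For $(1)\Leftrightarrow(5)$, the identity map is a $\mathcal{WC}_k^n(R^{op})$-envelope with the unique mapping property, so $(1)\Rightarrow(5)$ is trivial. For $(5)\Rightarrow(1)$, let $\phi\colon M\to C$ be a UMP envelope of an arbitrary module $M$. Since injective modules lie in $\mathcal{WC}_k^n(R^{op})$, the inclusion $M\hookrightarrow E(M)$ factors through $\phi$, which forces $\phi$ to be monic. Now consider $\tau\colon C\twoheadrightarrow C/\phi(M)\hookrightarrow E(C/\phi(M))$; its target is injective, hence in $\mathcal{WC}_k^n(R^{op})$, and $\tau\phi=0$. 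The UMP forces $\tau=0$, which in turn forces $C/\phi(M)=0$ (since the second arrow is monic), so $\phi$ is an isomorphism and $M\cong C\in\mathcal{WC}_k^n(R^{op})$.

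The remaining loop is $(1)\Rightarrow(7)\Rightarrow(6)$. The first implication is immediate: the identity is a $\mathcal{WC}_k^n$-precover and $R_R$ is $(n,k)$-weak cotorsion by $(1)$. The key step is $(7)\Rightarrow(6)$. Given a projective $P$, take a $\mathcal{WC}_k^n(R^{op})$-precover $\phi\colon C\to P$. Since $R_R\in\mathcal{WC}_k^n(R^{op})$ by $(7)$, every map $R\to P$ factors through $\phi$, which makes $\phi$ epic; projectivity of $P$ then produces a splitting $s\colon P\to C$, exhibiting $P$ as a direct summand of $C\in\mathcal{WC}_k^n(R^{op})$, so $P\in\mathcal{WC}_k^n(R^{op})$ by Proposition~\ref{1.5}(2). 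The subtle point worth flagging is that $\mathcal{WC}_k^n(R^{op})$ is only a priori closed under products, not coproducts, so one cannot leap directly from $R_R\in\mathcal{WC}_k^n(R^{op})$ to all projectives being $(n,k)$-weak cotorsion; the precover hypothesis in $(7)$ is precisely the extra structure needed to bootstrap.
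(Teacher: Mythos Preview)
Your proof is correct, and in several places it is more direct than the paper's.

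The two routes agree on the easy implications and on using Proposition~\ref{1.66} for $(1)\Rightarrow(4)$, but diverge on the harder steps. For $(3)$ and especially $(6)$, you run a uniform ``split the syzygy sequence'' argument landing directly in $(2)$: choose $0\to K\to P\to N\to 0$ with $P$ projective, note that $K$ (resp.\ $P$) is $(n,k)$-weak cotorsion under $(3)$ (resp.\ $(6)$), and conclude $\Ext^1(N,-)=0$ splits the sequence. The paper instead proves $(3)\Rightarrow(1)$ via a special $\mathcal{WF}_k^n$-precover and the hereditary perfect cotorsion pair, and---most strikingly---proves $(6)\Rightarrow(1)$ by a substantial detour: from ``projectives are $(n,k)$-weak cotorsion'' it deduces ``projectives are cotorsion'', hence $R$ is right perfect (citing Guil Asensio--Herzog), so flats are projective by Bass, and then it pushes the cotorsion property along flat resolutions. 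Your one-line argument sidesteps all of this machinery.

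For $(7)\Rightarrow(6)$, the paper invokes a result of Garc\'ia Rozas--Torrecillas stating that the precover hypothesis forces $\mathcal{WC}_k^n(R^{op})$ to be closed under direct sums, whence free (and then projective) modules lie in it. Your argument instead applies the precover of a given projective $P$ directly: the assumption $R_R\in\mathcal{WC}_k^n(R^{op})$ makes the precover epic, and projectivity of $P$ splits it. Both are valid; yours is self-contained and matches nicely the closing remark you make about why the precover hypothesis is essential.

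For $(5)$, both arguments use the same unique-mapping-property trick to force the envelope to be an isomorphism; you aim at $(1)$ while the paper aims at $(3)$, but the content is the same.
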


\begin{proof}
$(1)\Leftrightarrow (2)$ Clear because $(\mathcal{WF}_{ k}^n(R^{op})), \mathcal{WC}_{ k}^n(R^{op}))$  and $(Proj, \Mod\text{-}R)$ are cotorsion theories.

$(1)\Rightarrow (4)$ Follows from Proposition \ref{1.66}(1).

$(4)\Rightarrow (3)$  Take $m=0$.

$(3)\Rightarrow (1)$
Let $M$ be a right $R$-module. Then by \cite[Proposition  4.13]{AB}, there exists an exact sequence 
 $0\rightarrow L\rightarrow N\stackrel{\displaystyle \phi}\rightarrow M\rightarrow 0$, where $\phi$ is $\mathcal{WF}_{k}^n(R^{op})$-cover of $M$. Also by \cite[Lemma 2.1.1]{JX}, $L$ is in 
$\mathcal{WF}_{k}^n(R^{op})^{\bot}=\mathcal{WC}_{k}^n(R^{op})$. Since $(\mathcal{WF}_{k}^n(R^{op}), \mathcal{WF}_{k}^n(R^{op})^{\bot})$ is a hereditary perfect cotorsion pair, we deduce that every right $R$-module $M$ is $(n,k)$-weak cotorsion.

$(1)\Rightarrow (5)$ Clear.

$(5)\Rightarrow (3)$ 
Let $N\in\mathcal{WF}_{k}^n(R^{op})$. Then,  the following commutative diagram exists:
\begin{displaymath}\xymatrix{
&&&0\ar[d]\\
0\ar[r]&N \ar[r]^{\alpha} \ar[rd]_0& X \ar[r]^{\beta}\ar[d]^{\gamma\beta}& D\ar[r]\ar[ld]^{\gamma}& 0, \\
 &&X^{'}}
\end{displaymath}
where $\alpha$ and $\gamma$  are $\mathcal{WF}_{k}^n(R^{op})^{\bot}$-envelope. Since $\gamma\beta\alpha=0=0\alpha$, it follows by assumption that $\gamma\beta=0$. Consequently 
$D={\rm im}(\beta)\subseteq {\rm ker}(\gamma)=0$ and so $D=0$. Hence, $N$ is $(n,k)$-weak cotorsion.

$(1)\Rightarrow (7)$  Clear.

 $(7)\Rightarrow (6)$
By \cite[Proposition 1]{Z.JJG}, $\mathcal{WF}_{k}^n(R^{op})^{\bot}$ is closed under direct sums. Since $R$ is $(n,k)$-weak cotorsion, it follows that every free right $R$-module is $(n,k)$-weak cotorsion. Then by Proposition \ref{1.5}(2), we deduce that each projective right $R$-module is $(n,k)$-weak cotorsion.

$(6)\Rightarrow (1)$
Let $M$ be a right $R$-module. Then by \cite[Proposition 4.13]{AB}, there exists an exact sequence 
 $0\rightarrow L\rightarrow N\stackrel{\displaystyle \alpha}\rightarrow M\rightarrow 0$, where $\alpha$ is $\mathcal{WF}_{k}^n(R^{op})$-cover of $M$ and $L\in\mathcal{WF}_{k}^n(R^{op})^{\bot}$ by \cite[Lemma 2.1.1]{JX}.  By hypothesis, every projective right $R$-module is $(n,k)$-weak cotorsion, and then by Remark \ref{1.33}[(1), (2)], every projective right $R$-module is  cotorsion. Hence by \cite[Corollary 10]{GAA}, $R$ is perfect. So by \cite[Theorem Bass]{Rot2}, any flat right $R$-module $Y$ is projective and from the assumption, $Y$ is $(n,k)$-weak cotorsion.  Since every flat module is $n$-weak flat and $N$ is in  $\mathcal{WF}_{k}^n(R^{op})$, we deduce that for any flat resolution of $N$,  there is an exact sequence $0\rightarrow F_{k}\rightarrow F_{k-1}\rightarrow\cdots \rightarrow F_{1}\rightarrow F_{0}\rightarrow N\rightarrow0,$ where each $F_i$ is  $(n,k)$-weak cotorsion. If $X_i={\rm im}(F_{i}\rightarrow F_{i-1})$ for any $1\leq i\leq k$, then by \cite[Proposition 4.13]{AB}, every $X_i$ is  $(n,k)$-weak cotorsion, and so the  exact sequence $0\rightarrow X_{1}\rightarrow F_{0}\rightarrow N\rightarrow0$ implies that $N$ is $(n,k)$-weak cotorsion (because $X_1$ and $F_0$ are $(n,k)$-weak cotorsion). Similarly, $M$ is $(n,k)$-weak cotorsion with respect to the exact sequence  $0\rightarrow L\rightarrow N\stackrel{\displaystyle \alpha}\rightarrow M\rightarrow 0$.
\end{proof}
\section{Applications}
\ \ \ In this section, we characterize rings with finite $n$-super finitely presented dimension in terms of $n$-weak injective and $n$-weak flat modules and $(n,k)$-weak cotorsion modules.\\
First, we start with the following lemmas.
\begin{lem}\label{1.105}
Let $n$ and $k$ be non-negative integers. Then, the following conditions are equivalent for a left $R$-module $N$:
\begin{enumerate}
\item [\rm (1)] 
$N\in\mathcal{WI}_{ k}^{n}(R)^{\top}$.
\item [\rm (2)] 
$N^*\in\mathcal{WI}_{ k}^{n}(R)^{\bot}$.
\item [\rm (3)] 
$N\in^{\bot}\X,$ where $\X=\{A^{*} \ \mid \ A \ is\ in\ \mathcal{WI}_{ k}^{n}(R)\}$.
\item [\rm (4)] 
The functor $-\otimes_{R}N$ preserves the exactness of every exact sequence $0\rightarrow A\rightarrow B\rightarrow C\rightarrow 0$ where $C\in\mathcal{WI}_{ k}^{n}(R)$.

\end{enumerate}
\end{lem}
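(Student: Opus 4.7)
The plan is to recognise that all four conditions articulate a single vanishing statement, namely
\[
\Tor_{1}^{R}(N,A)=0\quad\text{for every }A\in\mathcal{WI}_{k}^{n}(R),
\]
and to translate between the four phrasings via the standard character-module duality. Throughout I write $(-)^{*}=\Hom_{\Z}(-,\Q/\Z)$ and exploit two facts: that $\Q/\Z$ is a faithful injective cogenerator of abelian groups, and the two derived Hom-tensor adjunctions $\Ext_{R}^{1}(L,N^{*})\cong\Tor_{1}^{R}(N,L)^{*}$ (for $L$ in $\mathcal{WI}_{k}^{n}(R)$) and $\Ext_{R}^{1}(N,A^{*})\cong\Tor_{1}^{R}(N,A)^{*}$ (for $A$ in $\mathcal{WI}_{k}^{n}(R)$), obtained by deriving the canonical adjunction $\Hom_{\Z}(M\otimes_{R}X,\Q/\Z)\cong\Hom_{R}(M,X^{*})$ or its companion.

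For (1)$\Leftrightarrow$(4), I apply the long exact sequence of $\Tor$ to an exact sequence $0\to A\to B\to C\to 0$ with $C\in\mathcal{WI}_{k}^{n}(R)$: the sequence $0\to N\otimes_{R}A\to N\otimes_{R}B\to N\otimes_{R}C\to 0$ is exact precisely when the connecting map out of $\Tor_{1}^{R}(N,C)$ is zero. The implication (1)$\Rightarrow$(4) is then immediate. For the converse I test (4) on the particular short exact sequence $0\to K\to P\to C\to 0$ with $P$ projective, where the long exact sequence forces $\Tor_{1}^{R}(N,C)\hookrightarrow N\otimes_{R}K$; condition (4) makes the latter inclusion composed into $N\otimes_{R}P$ still injective, which collapses $\Tor_{1}^{R}(N,C)$ to zero.

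For (1)$\Leftrightarrow$(2) I use the first adjunction: for every $L\in\mathcal{WI}_{k}^{n}(R)$, $\Ext_{R}^{1}(L,N^{*})$ is the character module of $\Tor_{1}^{R}(N,L)$, so the former vanishes for every such $L$ (which is (2)) exactly when the latter vanishes for every such $L$ (which is (1)), using that $\Q/\Z$ is a cogenerator. The argument for (1)$\Leftrightarrow$(3) is entirely analogous via the second adjunction: for every $A\in\mathcal{WI}_{k}^{n}(R)$, $\Ext_{R}^{1}(N,A^{*})$ is the character module of $\Tor_{1}^{R}(N,A)$, and $N\in{}^{\bot}\X$ translates directly into $\Tor_{1}^{R}(N,A)=0$ for every $A\in\mathcal{WI}_{k}^{n}(R)$.

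I do not anticipate a serious obstacle here: the lemma is essentially a repackaging of the two Ext-Tor character dualities together with one application of the long exact sequence of $\Tor$, and it uses no property of $\mathcal{WI}_{k}^{n}(R)$ beyond its being a class of modules. The only care required is the bookkeeping of sides, so that in each invocation of the Hom-tensor adjunction the variables sit on the correct side for the tensor product to be defined; once the conventions are fixed, each equivalence collapses to one natural isomorphism plus the faithful injectivity of $\Q/\Z$.
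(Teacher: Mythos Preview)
Your proposal is correct and follows essentially the same approach as the paper: the paper simply cites the standard isomorphisms $\Ext_{R}^{1}(N,D^{*})\cong\Tor_{1}^{R}(D,N)^{*}\cong\Ext_{R}^{1}(D,N^{*})$ from Rotman to obtain (1)$\Leftrightarrow$(2)$\Leftrightarrow$(3) and declares (1)$\Leftrightarrow$(4) ``clear.'' You have merely spelled out the long exact sequence argument for (1)$\Leftrightarrow$(4) and the role of the faithful injectivity of $\Q/\Z$, but the underlying mechanism is identical.
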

\begin{proof}
By \cite[Theorem 11.55]{Rot1}, for any right $R$-module $D$, ${\rm Ext}_{R}^1(N, D^*)\cong{\rm Tor}_{1}^R(D, N)^*\cong{\rm Ext}_{R}^1(D, N^*)$. So  $(1)\Leftrightarrow (2) \Leftrightarrow (3)$ follows.
Also, $(1)\Leftrightarrow (4)$ is clear.
\end{proof}
\begin{lem}\label{1.104}
Assume that ${\rm n\text{-}wid}(_RR)\leq k$ where $k\geq 1$. Then,
\begin{enumerate}
\item [\rm (1)] 
If $N\in\mathcal{WI}_{ k-1}^{n}(R)^{\bot}$, then there is an exact sequence 
 $0\rightarrow X\rightarrow E\rightarrow N\rightarrow 0$ where $ E$ is injective and $X\in\mathcal{WI}_{k}^{n}(R)^{\bot}$.
\item [\rm (2)] 
If  $N\in\mathcal{WI}_{ k-1}^{n}(R)^{\top}$, then there is an exact sequence 
 $0\rightarrow N\rightarrow F\rightarrow D\rightarrow 0$ where $ F$ is flat and $D\in\mathcal{WI}_{ k}^{n}(R)^{\top}$.
\end{enumerate}
\end{lem}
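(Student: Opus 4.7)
My plan is to establish both parts by parallel pushout and pullback constructions, exploiting the completeness of the cotorsion pair $(\mathcal{WI}_k^n(R),\mathcal{WI}_k^n(R)^\bot)$ together with the hypothesis $n\text{-}{\rm wid}(_RR)\le k$.

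For part (1), I would begin with a special $\mathcal{WI}_k^n(R)$-precover of $N$, that is, a short exact sequence $0\to Y\to V\to N\to 0$ with $V\in\mathcal{WI}_k^n(R)$ and $Y\in\mathcal{WI}_k^n(R)^\bot$; this exists because the cotorsion pair is complete (cf.\ \cite[Theorems 4.4 and 4.5]{AB}). Embedding $V$ into an injective envelope $V\hookrightarrow E$ and forming the pushout of $V\to N$ along this embedding yields two induced short exact sequences $0\to Y\to E\to P\to 0$ and $0\to N\to P\to E/V\to 0$. Since $V\in\mathcal{WI}_k^n(R)$ and $E$ is injective, an ${\Ext}$-shift across $V\hookrightarrow E$ shows $E/V\in\mathcal{WI}_{k-1}^n(R)$. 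The assumption $N\in\mathcal{WI}_{k-1}^n(R)^\bot$ then splits the second sequence into $P\cong N\oplus(E/V)$, and composing $E\twoheadrightarrow P$ with the projection $P\to N$ produces the required epimorphism $E\twoheadrightarrow N$. Its kernel $X$ fits into $0\to Y\to X\to E/V\to 0$, and I would verify $X\in\mathcal{WI}_k^n(R)^\bot$ via a final ${\Ext}$-vanishing computation using closure of the orthogonal class under extensions.

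For part (2), I would argue dually. Starting from a $\mathcal{WF}_k^n(R^{op})$-preenvelope $N\to W$ of $N$ (available by \cite[Theorem 4.5]{AB}), I would pull back along a flat cover of $W$ (which exists by the Bican--El Bashir--Enochs theorem) to obtain a monic map $N\hookrightarrow F$ into a flat module $F$. Setting $D$ to be the cokernel produces $0\to N\to F\to D\to 0$, and Proposition \ref{1.5q}(2) applies directly to yield $D\in\mathcal{WI}_k^n(R)^\top$, provided $N\to F$ is itself a $\mathcal{WF}_k^n(R^{op})$-preenvelope with flat target. This last property is the content of a ${\Tor}$-shifting argument, dual to the ${\Ext}$-shifting in part (1), using the hypothesis $N\in\mathcal{WI}_{k-1}^n(R)^\top$ to establish the required lifting of morphisms from modules in $\mathcal{WF}_k^n(R^{op})$.

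The main obstacle in both parts is establishing class-membership of the resulting kernel (resp.\ cokernel). This hinges on closure of $\bot$- and $\top$-classes under extensions together with the inclusion $\mathcal{WI}_{k-1}^n(R)\subseteq\mathcal{WI}_k^n(R)$; in particular, showing ${\Ext}_R^1(A,E/V)=0$ for every $A\in\mathcal{WI}_k^n(R)$ in part (1), where $E/V\in\mathcal{WI}_{k-1}^n(R)$, is the delicate step and will require careful use of the hypothesis $n\text{-}{\rm wid}(_RR)\le k$.
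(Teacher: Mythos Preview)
In part (1) your pushout construction is sound and yields $0\to X\to E\to N\to 0$ with $E$ injective, but the verification you propose for $X\in\mathcal{WI}_k^n(R)^\bot$ does not go through: closure under extensions applied to $0\to Y\to X\to E/V\to 0$ would need $E/V\in\mathcal{WI}_k^n(R)^\bot$, whereas you only know $E/V\in\mathcal{WI}_{k-1}^n(R)$, and the hypothesis $n\text{-}{\rm wid}(_RR)\le k$ gives no such vanishing. The repair is easy once you observe that your surjection $E\to N$ is itself a $\mathcal{WI}_k^n(R)$-precover, being the composite of $E\twoheadrightarrow P$ (kernel $Y\in\mathcal{WI}_k^n(R)^\bot$) with the split projection $P\to N$; then $\Ext^1_R(A,X)=0$ for every $A\in\mathcal{WI}_k^n(R)$ follows directly from the long exact sequence and the injectivity of $E$. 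The paper takes a different path: it runs the pushout from a projective resolution of $N$ (this is where $n\text{-}{\rm wid}(_RR)\le k$ enters, to place the projective in $\mathcal{WI}_k^n(R)$) only to conclude that $N$ is a quotient of an injective; it then takes a fresh weak-injective cover $E\to N$, so that $X\in\mathcal{WI}_0^0(R)^\bot$ by Wakamatsu, and upgrades to $X\in\mathcal{WI}_k^n(R)^\bot$ by an $\Ext$-shift on the \emph{test} module, embedding $L_1\in\mathcal{WI}_k^n(R)$ into $E(L_1)$ and using that $E(L_1)/L_1\in\mathcal{WI}_{k-1}^n(R)$ together with $N\in\mathcal{WI}_{k-1}^n(R)^\bot$.

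In part (2) there is a genuine gap: pulling back $N\to W$ along a flat cover $F\to W$ yields a square with maps from the pullback \emph{into} $N$ and $F$, not a monomorphism $N\hookrightarrow F$, and one cannot lift $N\to W$ through $F\to W$ without $N$ being projective. So your construction does not produce the needed embedding of $N$ into a flat module, and Proposition \ref{1.5q}(2) cannot be invoked. The paper obtains this embedding by passing to character modules: by Lemma \ref{1.105} one has $N^*\in\mathcal{WI}_{k-1}^n(R)^\bot$, so the pushout argument of (1) realises $N^*$ as a quotient of an injective $E(F)$; dualising and using the pure embedding $N\hookrightarrow N^{**}$ gives $N\hookrightarrow E(F)^*$ with $E(F)^*$ flat. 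A flat preenvelope $N\to F$ is therefore monic, its cokernel $D$ lies in ${}^\bot\mathscr{F}_0(R^{op})\subseteq\mathcal{WI}_0^0(R)^\top$ by Corollary \ref{1.5qq}, and a $\Tor$-shift on the test module (dual to the $\Ext$-shift above) gives $D\in\mathcal{WI}_k^n(R)^\top$.
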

\begin{proof}
(1)
 Consider an exact sequence $0\rightarrow Y\rightarrow F\rightarrow N\rightarrow0$ of left $R$-modules, where $F$ is projective. Then, we have the following pushout diagram
$$\xymatrix{
&&0\ar[d]&0\ar[d]&\\
0\ar[r]&Y\ar[r]\ar@{=}[d]&F\ar[r]\ar[d]^{\alpha}&N \ar[r]\ar[d]&0  \\
0\ar[r]&Y\ar[r]& E(F)\ar[r]\ar[d]&L\ar[r]\ar[d]& 0 \\
&&B\ar@{=}[r]\ar[d]&B\ar[d]&\\
&&0&0&\\
}$$
where $\alpha$ is an injective envelope of $F$.  Let $U$ be an $n$-super finitely presented left $R$-module with associated special super finitely presented module $K_{n-1}$. Then by assumption and from \cite[Proposition 3.2]{AB}, ${\rm Ext}_{R}^{k+1}(K_{n-1}, R)=0$ so ${\rm Ext}_{R}^{k+1}(K_{n-1}, F)=0$ and hence ${\rm n\text{-}wid}_{R}(F)\leq k$. Also by \cite[Proposition 3.2]{AB}, it follows that ${\rm n\text{-}wid}_{R}(B)\leq k-1$, and so 
$B\in\mathcal{WI}_{ k-1}^{n}(R)$. Hence, ${\rm Ext}_{R}^{1}(B, N)=0$, and consequently
 the sequence $0\rightarrow N\rightarrow L\rightarrow B\rightarrow0$ splits and then $N$ is a quotient of $E(F)$.

By \cite[Theorem 3.1]{Z.H}, there is a   weak injective cover of  left $R$-module $N$.  If  $\gamma: E\rightarrow N$ is a weak injective cover of $N$, then $\gamma$ is epic. Now consider the  exact sequence $0\rightarrow X\rightarrow E\rightarrow N\rightarrow 0$. By \cite[Lemma 2.1.1]{JX}, $X\in\mathcal{WI}_{0}^{0}(R)^{\bot}$. We prove that $X\in\mathcal{WI}_{k}^{n}(R)^{\bot}$. For let $L_1$ be in $\mathcal{WI}_{ k}^{n}(R)$ and consider the exact sequence $0\rightarrow L_1\rightarrow E(L_1)\rightarrow L_2\rightarrow 0$, where 
$L_2\in\mathcal{WI}_{ k-1}^{n}(R)$ by \cite[Proposition 3.2]{AB}. Then we get the induced exact sequence 
$${\rm Exr}^{1}_{R}(L_2, N)\to {\rm Exr}^{2}_{R}(L_2, X)\to{\rm Exr}^{2}_{R}(L_2, E)=0.$$
By hypothesis, we have ${\rm Exr}^{1}_{R}(L_2, N)=0$  and so ${\rm Exr}^{2}_{R}(L_2, X)=0$. Also,  we obtain the induced exact sequence
$${\rm Exr}^{1}_{R}(E(L_1), X)\to {\rm Exr}^{1}_{R}(L_1, X)\to{\rm Exr}^{2}_{R}(L_2, X).$$
We have ${\rm Exr}^{2}_{R}(L_2, X)=0$ and since $X$  is in $\mathcal{WI}_{0}^{0}(R)^{\bot}$ and $E(L_1)\in\mathcal{WI}_{0}^{0}(R)$,  it follows that ${\rm Exr}^{1}_{R}(E(L_1), X)=0$. Hence, ${\rm Exr}^{1}_{R}(L_1, X)=0$ and consequently $X\in\mathcal{WI}_{ k}^{n}(R)^{\bot}$.

(2)
 Consider an exact sequence $0\rightarrow K\rightarrow F\rightarrow N^*\rightarrow0$ of left $R$-modules where $F$ is projective. Then, we have the following pushout diagram
$$\xymatrix{
&&0\ar[d]&0\ar[d]&\\
0\ar[r]&K\ar[r]\ar@{=}[d]&F\ar[r]\ar[d]^{\alpha}&N^* \ar[r]\ar[d]&0  \\
0\ar[r]&K\ar[r]& E(F)\ar[r]\ar[d]&L\ar[r]\ar[d]& 0 \\
&&Z\ar@{=}[r]\ar[d]&Z\ar[d]&\\
&&0&0&\\
}$$
where $\alpha$ is an injective envelope of $F$.  By Lemma \ref{1.105}, $N^*\in\mathcal{WI}_{ k}^{n}(R)^{\bot}$. Then similar to the proof of (1), we see 
 ${\rm Ext}_{R}^{1}(Z, N^*)=0$, and hence the sequence $0\rightarrow N^*\rightarrow L\rightarrow Z\rightarrow0$ splits. Thus, $N^*$ is a quotient of $E(F)$ and then we have the exact sequence  $0\rightarrow N^{**}\rightarrow (E(F))^*$ exists with $ (E(F))^*$ is flat. By \cite[Proposition 2.3.5]{JX}, $N$ is pure in $N^{**}$, and hence  $N$ embeds in  flat $R$-module.

  Let  $\beta: N\rightarrow F$ be a  flat preenvelope of $N$. Then, $\beta$ is monic, and so we have the exact sequence $0\rightarrow N\rightarrow F\rightarrow D\rightarrow 0$. By \cite[Lemma 2.1.2]{JX}, $D\in  {^{\bot}\mathscr{F}_{ 0}(R^{op})}\subseteq {^{\bot}\mathcal{WF}_{ 0}^{0}(R^{op})}$, and then by Corollary \ref{1.5qq}, $D$ is in $\mathcal{WI}_{0}^{0}(R)^{\top}$. We show that $D\in\mathcal{WI}_{ k}^{n}(R)^{\top}$. Now, let $K_1$ be in $\mathcal{WI}_{k}^{n}(R)$. Then, there exists an exact sequence $0\rightarrow K_1\rightarrow E(K_1)\rightarrow K_2\rightarrow 0$, where 
$K_2\in\mathcal{WI}_{ k-1}^{n}(R)$ by \cite[Proposition 3.2]{AB}. Then we get the exact sequence 
$$0={\rm Tor}_{2}^{R}(K_2, F)\to {\rm Tor}_{2}^{R}(K_2, D)\to{\rm Tor}_{1}^{R}(K_2, N).$$
We have ${\rm Tor}_{1}^{R}(K_2, N)=0$ and so $ {\rm Tor}_{2}^{R}(K_2, D)=0$. Also,  we have the following exact sequence
$$0={\rm Tor}_{2}^{R}(K_2, D)\to {\rm Tor}_{1}^{R}(K_1, D)\to {\rm Tor}_{1}^{R}(E(K_1), D).$$
Since $D$  is in $\mathcal{WI}_{0}^{0}(R)^{\top}$ and $E(K_1)\in\mathcal{WI}_{0}^{0}(R)$, we deduce that ${\rm Tor}_{1}^{R}(E(K_1), D)=0$. Hence, ${\rm Tor}_{1}^{R}(K_1, D)=0$ and then $D\in\mathcal{WI}_{ k}^{n}(R)^{\top}$.
\end{proof}

\begin{lem}\label{1.103}
	Assume that $ {\rm l.nsp. gldim}(R)<\infty$. Then,  ${\rm l.nsp. gldim}(R)$=${\rm n\text{-}wid}(_RR)$.
\end{lem}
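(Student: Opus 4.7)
The plan is to establish two inequalities. Writing $w := n\text{-}{\rm wid}(_RR)$ and $s := {\rm l.n.sp.gldim}(R)$, which is finite by assumption, the inequality $w \leq s$ is immediate: every special super finitely presented $K_{n-1}$ satisfies ${\rm pd}_R(K_{n-1}) \leq s$, so ${\rm Ext}_R^{s+1}(K_{n-1},R) = 0$. All the work is in proving $s \leq w$, which I would attack by contradiction, assuming some special super finitely presented $K_{n-1}$ has ${\rm pd}_R(K_{n-1}) = t$ with $w < t \leq s$.

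The key preliminary observation is the closure of the class of special super finitely presented modules under syzygies: if $K_{n-1} = {\rm Im}(F_n \to F_{n-1})$ arises from an $n$-super finitely presented module $U$ via a resolution $\cdots \to F_{n+1} \to F_n \to \cdots \to F_0 \to U \to 0$, then for every $j \geq 1$ the syzygy $\Omega^j(K_{n-1}) = K_{n-1+j}$ is itself $\Omega^n(V)$ for the $n$-super finitely presented module $V := K_{j-1}$ (with the convention $K_{-1} := U$), using the tail $\cdots \to F_{n+j+1} \to F_{n+j} \to \cdots \to F_j \to K_{j-1} \to 0$; this is a routine index-shift check. Setting $N := \Omega^{t-1}(K_{n-1})$ therefore produces a special super finitely presented module with ${\rm pd}_R(N) = 1$, and a dimension shift gives ${\rm Ext}_R^1(N,R) \cong {\rm Ext}_R^t(K_{n-1},R) \cong {\rm Ext}_R^{w+1}(K_{n+t-w-2},R)$; the last group vanishes by the defining property of $w$ applied to the (still special sfp) syzygy $K_{n+t-w-2}$.

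The contradiction then comes from a double-dualization argument against $R$. From the finitely generated projective resolution of $N$ inherited from $U$, there is an exact sequence $0 \to \Omega^1(N) \to P \to N \to 0$ with $P$ finitely generated projective; since ${\rm pd}_R(N) = 1$ the syzygy $\Omega^1(N)$ is projective, and being a quotient of a finitely generated projective it is finitely generated projective. Applying ${\rm Hom}_R(-,R)$ and using ${\rm Ext}_R^1(N,R) = 0$ yields a short exact sequence $0 \to N^{*} \to P^{*} \to \Omega^1(N)^{*} \to 0$ of right $R$-modules; since $\Omega^1(N)^{*}$ is finitely generated projective the sequence splits, so $N^{*}$ is finitely generated projective. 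A second application of ${\rm Hom}_R(-,R)$, together with the Five Lemma against the canonical evaluations $P \to P^{**}$ and $\Omega^1(N) \to \Omega^1(N)^{**}$ (both isomorphisms for finitely generated projectives), forces $N \to N^{**}$ to be an isomorphism. But $N^{**} = (N^{*})^{*}$ is finitely generated projective since $N^{*}$ is, so $N$ itself is finitely generated projective, contradicting ${\rm pd}_R(N) = 1$.

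The main obstacle in this plan is the closure observation on syzygies, since it is the only place where the specific structure of the $n$-super finitely presented resolution really matters; once one has reduced to a special super finitely presented module $N$ of projective dimension one whose first Ext into $R$ vanishes, the double-dualization argument is a standard manipulation of finitely generated projective presentations.
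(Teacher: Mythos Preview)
Your argument is correct, but it takes a substantially different route from the paper. The paper's proof of $s\leq w$ is a one–line long exact sequence trick: given that $s=k<\infty$, pick any $K_{n-1}$ and any $M$ with ${\rm Ext}_R^k(K_{n-1},M)\neq 0$, cover $M$ by a free module $F$, and use the exact sequence ${\rm Ext}_R^k(K_{n-1},F)\to{\rm Ext}_R^k(K_{n-1},M)\to{\rm Ext}_R^{k+1}(K_{n-1},L)=0$ (the last vanishing because $s=k$) to force ${\rm Ext}_R^k(K_{n-1},R)\neq 0$, hence $w\geq k$. No syzygy bookkeeping and no dualization are needed.

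Your approach instead isolates the structural fact that the class of special super finitely presented modules is closed under syzygies, reduces to the case ${\rm pd}_R(N)=1$ with ${\rm Ext}_R^1(N,R)=0$, and then runs a double-dualization argument using the finitely generated projective presentation of $N$ inherited from the resolution of $U$. This is longer but more explicit: it pinpoints exactly which finiteness hypothesis on the resolution is doing the work (the $F_i$ for $i\geq n$ being finitely generated projective is what makes the evaluation maps into double duals isomorphisms), and the syzygy-closure observation is a reusable lemma in its own right. The paper's argument, by contrast, trades this structural insight for brevity by exploiting the global bound $s<\infty$ to kill the ${\rm Ext}^{k+1}$ term directly. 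Both are valid; the paper's is the expected short proof, yours is the hands-on one.
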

\begin{proof}
	By \cite[Proposition 3.5  ]{AB}, ${\rm l.nsp. gldim}(R)=\sup\{ n$-${\rm wid}_{R}(M) \ \mid \ {\rm M \ is \ a\ left} \ R$-$module \}={\sup}\{ n$-${\rm wfd}_{R}(M) \ \mid \ {\rm M \ is \ a\ right} \ R$-$module \}$. 
	So, we have ${\rm n\text{-}wid}(_RR)\leq {\rm l.nsp. gldim}(R)$. Now, let ${\rm l.nsp. gldim}(R)=k.$ We show that ${\rm n\text{-}wid}(_RR)\geq k.$ For let $U$ be an $n$-super finitely presented left $R$-module with associated special super finitely presented module $K_{n-1}$. So there exists left $R$-module $M$ such that ${\rm Ext}_{R}^{k}(K_{n-1}, M)\neq0$. Consider an exact sequence $0\rightarrow L\rightarrow F\rightarrow M\rightarrow 0$ where $F$ is free. Then we have the following exact sequence:
	$${\rm Ext}_{R}^k(K_{n-1}, F)\rightarrow{\rm Ext}_{R}^k(K_{n-1}, M)\rightarrow {\rm Ext}_{R}^{k+1}(K_{n-1}, L).$$
	We have ${\rm Ext}_{R}^{k+1}(K_{n-1}, L)$ (because ${\rm l.nsp. gldim}(R)=k$). Therefore ${\rm Ext}_{R}^k(K_{n-1},R)\neq0$, otherwise ${\rm Ext}_{R}^k(K_{n-1}, F)=0$ and so from the exact sequence above, ${\rm Ext}_{R}^k(K_{n-1}, M)=0$ which is a contradition. Consequently ${\rm n\text{-}wid}(_RR)\geq k$.
\end{proof}
The following theorem extends the results of Mao and Ding \cite[Theorem 6.4 ]{LM3} and of Selvaraj and Prabakaran \cite[ Theorem 6]{B2B}.
 
\begin{thm}\label{Thm-lnspg-finite}
Let $n$ be a non-negative integer and assume that ${\rm n\text{-}wid}(_RR)\leq k$ where $k\geq 1$. Then,  the following conditions are equivalent:
\begin{enumerate}
\item [\rm (1)] 
${\rm l.n.sp. gldim}(R)<\infty$.
\item [\rm (2)] 
${\rm l.n.sp. gldim}(R)\leq k$.
\item [\rm (3)] 
Every left $R$-module in $^{\bot}\mathcal{WI}_{ k}^{n}(R)$ belongs to $\mathcal{WI}_{ k}^{n}(R)$.
\item [\rm (4)] 
Every left $R$-module in $\mathcal{WI}_{ k-1}^{n}(R)^{\bot}$ is injective.
\item [\rm (5)] 
Every left $R$-module in $\mathcal{WI}_{ k-1}^{n}(R)^{\bot}$ is $n$-weak injective.
\item [\rm (6)] 
Every left $R$-module in $\mathcal{WI}_{ k}^{n}(R)^{\bot}$ is injective.
\item [\rm (7)] 
Every left $R$-module in $\mathcal{WI}_{ k}^{n}(R)^{\bot}$ is $n$-weak injective.
\item [\rm (8)] 
Every left $R$-module in $\mathcal{WI}_{ k}^{n}(R)^{\bot}$ belongs to $\mathcal{WI}_{ k}^{n}(R)$.
\item [\rm (9)] 
Every left $R$-module (with $M\in{^{\bot}\mathcal{WI}_{ k-1}^{n}(R)}$) has a monic $\mathcal{WI}_{ k-1}^{n}(R)$-cover.
\end{enumerate}
If $R$ is a left coherent ring, then the above conditions are also equivalent to:
\begin{enumerate}
\item [\rm (10)] 
Every right $R$-module in $\mathcal{WI}_{ k-1}^{n}(R)^{\top}$ is flat.
\item [\rm (11)] 
Every right $R$-module in $\mathcal{WI}_{ k-1}^{n}(R)^{\top}$ is $n$-weak flat.
\item [\rm (12)] 
Every right $R$-module in $\mathcal{WI}_{k}^{n}(R)^{\top}$ is flat.
\item [\rm (13)] 
Every right $R$-module in $\mathcal{WI}_{ k}^{n}(R)^{\top}$ is $n$-weak flat.
\end{enumerate}
\end{thm}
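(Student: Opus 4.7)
The plan is to establish the many equivalences in two stages: first the cycle among (1)--(9), then bring in (10)--(13) through character-module duality under coherence.

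The equivalence (1)$\Leftrightarrow$(2) is immediate from Lemma \ref{1.103}: if ${\rm l.n.sp.gldim}(R)$ is finite it equals $n$-${\rm wid}(_{R}R)$, which is $\leq k$ by hypothesis. The key reformulation underlying the remaining implications is that (2) is equivalent to the statement $\mathcal{WI}_{k}^{n}(R) = R$-$\Mod$, because ${\rm pd}_{R}(K_{n-1}) \leq k$ for every special super fp $K_{n-1}$ is exactly the vanishing of ${\rm Ext}_{R}^{k+1}(K_{n-1}, M)$ for all left $R$-modules $M$. Under this reformulation, (2) implies (3), (6), (7), (8) formally: $\mathcal{WI}_{k}^{n}(R)^{\bot}$ collapses to the class of injectives, the containment $^{\bot}\mathcal{WI}_{k}^{n}(R) \subseteq \mathcal{WI}_{k}^{n}(R) = R$-$\Mod$ is trivial, and injectives are $n$-weak injective. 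For (2)$\Rightarrow$(4), Lemma \ref{1.104}(1) realizes any $N \in \mathcal{WI}_{k-1}^{n}(R)^{\bot}$ as a quotient $0 \to X \to E \to N \to 0$ with $E$ injective and $X \in \mathcal{WI}_{k}^{n}(R)^{\bot}$; under (2) this $X$ is injective, so the sequence splits and $N$ is a direct summand of $E$, hence injective. Then (4)$\Rightarrow$(5) is trivial, and (9) follows by combining (4) with the existence of $\mathcal{WI}_{k-1}^{n}(R)$-covers granted by \cite[Theorem 4.5]{AB}.

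To close the cycle I would argue (4)$\Rightarrow$(2) directly. By \cite[Theorems 4.4 and 4.5]{AB} the pair $(\mathcal{WI}_{k-1}^{n}(R), \mathcal{WI}_{k-1}^{n}(R)^{\bot})$ is a complete (hereditary) cotorsion pair, so every left $R$-module $M$ admits a short exact sequence $0 \to M \to C \to W \to 0$ with $C \in \mathcal{WI}_{k-1}^{n}(R)^{\bot}$ and $W \in \mathcal{WI}_{k-1}^{n}(R)$. Assuming (4), $C$ is injective; the long exact Ext sequence combined with ${\rm Ext}_{R}^{j}(K_{n-1}, C) = 0$ for all $j \geq 1$ yields ${\rm Ext}_{R}^{k+1}(K_{n-1}, M) \cong {\rm Ext}_{R}^{k}(K_{n-1}, W) = 0$, where the final vanishing uses $W \in \mathcal{WI}_{k-1}^{n}(R)$. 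This establishes (2). The remaining implications (6)$\Rightarrow$(4), (7)$\Rightarrow$(5), and (8)$\Rightarrow$(3) follow routinely from Lemma \ref{1.104}(1) together with the inclusion $\mathcal{WI}_{k}^{n}(R)^{\bot} \subseteq \mathcal{WI}_{k-1}^{n}(R)^{\bot}$.

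For the coherent case, Lemma \ref{1.105} identifies $\mathcal{WI}_{k}^{n}(R)^{\top}$ with those modules whose character dual lies in $\mathcal{WI}_{k}^{n}(R)^{\bot}$. Under left coherence, $N$ is flat if and only if $N^{*}$ is injective (Lambek), and \cite[Proposition 3.6]{AB} gives the analogous correspondence between $n$-weak flatness on the right and $n$-weak injectivity of the dual. Transcribing (4)--(7) across this duality produces (10)--(13) respectively, closing all equivalences. The main obstacle will be (4)$\Rightarrow$(2): the argument depends on the complete cotorsion pair structure from \cite{AB} and on the dimension-shifting step, which uses crucially both $k \geq 1$ and the standing hypothesis $n\text{-}{\rm wid}(_{R}R) \leq k$ so that the higher Ext's into injective modules vanish. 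A secondary technical point is ensuring the character-module translation in the coherent case is faithful in both directions; left coherence is essential for reconstructing the relevant injectivity-type invariants on the left from flatness-type data on the right.
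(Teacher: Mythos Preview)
Your overall architecture is reasonable, and the dimension-shift argument for (4)$\Rightarrow$(2) via a special $\mathcal{WI}_{k-1}^{n}(R)^{\bot}$-preenvelope is a legitimate alternative to the paper's route (which instead passes through (4)$\Rightarrow$(6)$\Leftrightarrow$(2) using \cite[Propositions 4.10 and 4.12]{AB}). However, there are genuine gaps.

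First, your cycle among (1)--(9) does not close. You have (7)$\Rightarrow$(5) from Lemma~\ref{1.104}(1), but you provide no arrow \emph{out} of (5) or (7); the trivial inclusion gives only (5)$\Rightarrow$(7), which is circular. The missing step is the paper's (5)$\Rightarrow$(4) (equivalently (7)$\Rightarrow$(6)): given $M\in\mathcal{WI}_{k-1}^{n}(R)^{\bot}$ which is $n$-weak injective, embed $M$ in an injective $E$; the cokernel $D$ is $n$-weak injective by closure, hence lies in $\mathcal{WI}_{k-1}^{n}(R)$, so ${\rm Ext}^{1}(D,M)=0$ and the sequence splits. Your claim that (8)$\Rightarrow$(3) follows from ``Lemma~\ref{1.104}(1) together with the inclusion'' is also incorrect: Lemma~\ref{1.104}(1) relates the right-orthogonals $\mathcal{WI}_{k-1}^{n}(R)^{\bot}$ and $\mathcal{WI}_{k}^{n}(R)^{\bot}$, not the left-orthogonal $^{\bot}\mathcal{WI}_{k}^{n}(R)$ appearing in (3). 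The paper instead proves (8)$\Rightarrow$(2) directly by taking an epic $\mathcal{WI}_{k}^{n}(R)$-cover (available since $n\text{-}{\rm wid}(_RR)\leq k$ forces projectives into $\mathcal{WI}_{k}^{n}(R)$), applying Wakamatsu's lemma to place the kernel in $\mathcal{WI}_{k}^{n}(R)^{\bot}$, and then invoking (8). You also omit (3)$\Rightarrow$(2) and (9)$\Rightarrow$(2), which the paper obtains from \cite[Proposition 4.10]{AB}.

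Second, the coherent case is where your plan fails most seriously. Character duality via Lemma~\ref{1.105} transports (4)--(7) forward to (10)--(13), but it does \emph{not} give the reverse: not every left module is a character module, and the pure embedding $M\hookrightarrow M^{**}$ does not let you recover injectivity of $M$ from flatness of $M^{*}$. The paper's return arrow is (12)$\Rightarrow$(1), and it proceeds by an entirely different mechanism: since FP-injectives are $n$-weak injective, one has $\mathscr{FI}_{k}^{\top}\subseteq\mathcal{WI}_{k}^{n}(R)^{\top}$, so (12) forces every module in $\mathscr{FI}_{k}^{\top}$ to be flat; then \cite[Theorem~6.4]{LM3} (which requires left coherence) bounds the weak global dimension by $k$, and Proposition~\ref{prop-lnsp-wgldim} finishes. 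This is precisely where the coherence hypothesis is consumed, and your duality sketch does not reach it.
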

\begin{proof}
    $(2)\Rightarrow (1)$, $(4)\Rightarrow (6)\Rightarrow (7)$, $(4)\Rightarrow(5)\Rightarrow (7)$, $(10)\Rightarrow(11)\Rightarrow(13)$ and $(10)\Rightarrow(12)\Rightarrow(13)$ are clear.

$(1)\Rightarrow (2)$ Clear by Lemma \ref{1.103}.

$(2)\Leftrightarrow (3)\Leftrightarrow(9)$ and $(2)\Rightarrow (8)$ Follow from \cite[Proposition 4.10]{AB}.

$(8)\Rightarrow (2)$  Let $M$ be a left $R$-module. Since ${\rm n\text{-}wid}(_RR)\leq k$, it follows that $R\in\mathcal{WI}_{ k}^{n}(R)$. Then by  \cite[Proposition 4.8]{AB}, there is a short exact sequence $0\rightarrow K\rightarrow F\stackrel{\displaystyle h}\rightarrow M\rightarrow0$, where  
$h$ is an epic $\mathcal{WI}_{ k}^{n}(R)$-cover of $M$. Hence by \cite[Lemma 2.1.1]{JX} and \cite[Proposition 4.12]{AB}, we deduce that $K\in\mathcal{WI}_{ k}^{n}(R)^{\bot}$. By
assumption, $K\in\mathcal{WI}_{ k}^{n}(R)$, and so \cite[Corollary 3.4 ]{AB} implies that $M\in\mathcal{WI}_{ k}^{n}(R)$. Consequently, by  \cite[Proposition 4.10]{AB}, ${\rm l.n.sp. gldim}(R)\leq k$.

 $(6)\Rightarrow (4)$ Clear by Lemma \ref{1.104}(1).

$(7)\Rightarrow (5)$ Clear by Lemma \ref{1.104}(1) and  \cite[Corollary 3.4(1)]{AB}.

$(2)\Leftrightarrow (6)$ Follows from \cite[Propositions 4.10 and 4.12 (1)]{AB}.

$(5)\Rightarrow (4)$
Let $M \in \mathcal{WI}_{ k-1}^{n}(R)^{\bot}$. By assumption, $M$ is also $n$-weak injective. Now consider an exact sequence $0\rightarrow M\rightarrow E\rightarrow D\rightarrow0$, where $E$ is injective and so also $n$-weak injective. Then by \cite[Corollary 3.4 ]{AB},
$D$ is $n$-weak injective. By \cite[Theorem 2.13(1)]{AB}, ${\rm Ext}_{R}^k(K_{n-1}, D)=0$ for any  special super finitely presented $K_{n-1}$, and hence by 
\cite[Proposition 3.2]{AB}, $D\in \mathcal{WI}_{ k-1}^{n}(R)$. Then, ${\rm Ext}_{R}^{1}(D, M)=0$. So the above exact sequence splits and consequently $M$ is injective.

$(7)\Rightarrow (6)$  Similar to the proof of $(5)\Rightarrow (4)$.

$(12)\Rightarrow (10)$ Trivial by  Lemma \ref{1.104}(2).

$(13)\Rightarrow (11)$ Trivial by Lemma \ref{1.104}(2) and  \cite[Corollary 3.4(2)]{AB}.

$(13)\Rightarrow (12)$
Let $M$ be in $\mathcal{WI}_{ k}^{n}(R)^{\top}$. Then by (13), $M$ is $n$-weak flat. By Lemma \ref{1.105}, $M^*\in\mathcal{WI}_{ k}^{n}(R)^{\bot}$ and is also $n$-weak injective by \cite[Proposition 2.6]{AB}. Similar to the proof of $(5)\Rightarrow (4)$, we see that
$M^*\in\mathcal{WI}_{ k}^{n}(R)^{\bot}$ and is also injective. Therefore by   Lemma \ref{1.105} again, $M\in\mathcal{WI}_{ k}^{n}(R)^{\top}$ and is also flat.

$(12)\Rightarrow (1)$
Every FP-injective $R$-module is $n$-weak injective. So, if $\mathscr{FI_n}_k$ denotes the class of FP-injective right $R$-modules with dimension at most $k$, then ${\mathscr{FI_n}_k}^{\top}\subseteq\mathcal{WI}_{ k}^{n}(R)^{\top}$.
By assumption every right $R$-module in ${\mathscr{FI_n}_k}^{\top}$ is flat. Hence by \cite[Theorem 6.4]{LM3}, $wD(R)\leq k$, and then by Proposition \ref{prop-lnsp-wgldim}, ${\rm l.n.sp. gldim}(R)<\infty$.

$(6)\Rightarrow (12)$ Follows from Lemma \ref{1.103}.
\end{proof}

\begin{thm}\label{Thm-lnspgldim-k}
Let $n$ and $k$ be non-negative integers and consider the following assertions:
\begin{enumerate}
\item [\rm (1)] 
${\rm l.n.sp. gldim}(R)\leq k$.
\item [\rm (2)] 
Every $(n, k)$-weak cotorsion right $R$-module is injective.
\item [\rm (3)] 
${\rm id}_{R}(M)\leq k$ for every $(n,0)$-weak cotorsion right $R$-module $M$.
\item [\rm (4)] 
Every $(n, k)$-weak cotorsion right $R$-module is in $\mathcal{WF}_{ k}^{n}(R^{op})$.
\item [\rm (5)] 
${\rm id}_{R}(M)\leq k^{'}$ for any $k^{'}$ with $0\leq k^{'}\leq k$ and any $(n, k-k^{'})$-weak cotorsion right $R$-module
$M$.
\item [\rm (6)] 
${\rm id}_{R}(M)\leq k^{'}$ for some $k^{'}$ with $0\leq k^{'}\leq k$ and any $(n, k-k^{'})$-weak cotorsion right $R$-modul
$M$.
\item [\rm (7)] 
${\rm Ext}_{R}^{1+i}(M,N)=0$ for any $i\geq m$ and any $(n,k+m)$-weak cotorsion module $M$ and $(n,k)$-weak cotorsion module $N$.
\item [\rm (8)] 
Every right $R$-module $M$ has a $\mathcal{WF}_{ k}^{n}(R^{op})$-cover with the unique mapping property.

Then, $(1)\Leftrightarrow (2)\Leftrightarrow(4)\Leftrightarrow(7)\Leftrightarrow(8)$ and $(1)\Rightarrow (5)\Rightarrow (6)\Rightarrow (3)$.
\end{enumerate}
\end{thm}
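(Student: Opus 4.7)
The plan is to establish the equivalences $(1)\Leftrightarrow(2)\Leftrightarrow(4)\Leftrightarrow(7)\Leftrightarrow(8)$ by routing everything through a reformulation of $(1)$, and to handle the linear chain $(1)\Rightarrow(5)\Rightarrow(6)\Rightarrow(3)$ separately. The key preliminary observation is that $(1)$ is equivalent to every right $R$-module lying in $\mathcal{WF}_k^n(R^{op})$: ${\rm l.n.sp. gldim}(R)\leq k$ forces ${\rm pd}_{R}(K_{n-1})\leq k$ for every special super finitely presented $K_{n-1}$, hence $n$-${\rm wfd}_R(M)\leq k$ for every $M$. A second preliminary is that the cotorsion pair $(\mathcal{WF}_k^n(R^{op}),\mathcal{WC}_k^n(R^{op}))$ is hereditary: Proposition \ref{1.66}(1) with $m=0$ yields ${\rm Ext}_{R}^{j}(F,C)=0$ for every $F\in\mathcal{WF}_k^n(R^{op})$, every $(n,k)$-weak cotorsion $C$, and every $j\geq 1$, so $\mathcal{WC}_k^n(R^{op})$ is closed under cokernels of monics.

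Given these preparations, the implications from $(1)$ are essentially immediate: if every right $R$-module is in $\mathcal{WF}_k^n(R^{op})$, then any $(n,k)$-weak cotorsion $M$ satisfies ${\rm Ext}_{R}^{1}(N,M)=0$ for every $N$, giving $(2)$; $(4)$ is tautological, $(7)$ follows from $(2)$ since the second argument $N$ becomes injective, and for $(8)$ the identity of $M$ is already a $\mathcal{WF}_k^n(R^{op})$-cover with trivially unique factorizations. For the converses I would argue as follows. For $(2)\Rightarrow(1)$, the inclusion $\mathcal{I}\subseteq\mathcal{WC}_k^n(R^{op})$ from Remark \ref{1.33}(3) combined with $(2)$ forces $\mathcal{WC}_k^n(R^{op})=\mathcal{I}$; the cotorsion pair structure then gives $\mathcal{WF}_k^n(R^{op})={}^{\bot}\mathcal{I}=\Mod\text{-}R$, since ${\rm Ext}_{R}^{1}(M,I)=0$ for every $M$ and every injective $I$. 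The implications $(4)\Rightarrow(2)$ and $(7)\Rightarrow(2)$ share a template: for an $(n,k)$-weak cotorsion $M$, take $0\to M\to E(M)\to D\to 0$; since $E(M)$ is injective, hence $(n,k)$-weak cotorsion, heredity places $D$ in $\mathcal{WC}_k^n(R^{op})$, whereupon $(4)$ puts $D$ in $\mathcal{WF}_k^n(R^{op})$ (forcing ${\rm Ext}_{R}^{1}(D,M)=0$) or $(7)$ with $m=i=0$ supplies this vanishing directly; either way the sequence splits and $M$ is injective. For $(8)\Rightarrow(1)$, given the $\mathcal{WF}_k^n(R^{op})$-cover $\phi\colon F\to M$ with unique mapping property, I would plug in $R_R$, which is flat and hence in $\mathcal{WF}_k^n(R^{op})$: for each $m\in M\cong{\rm Hom}_{R}(R,M)$ there is a unique $f\in F\cong{\rm Hom}_{R}(R,F)$ with $\phi(f)=m$, so $\phi$ is bijective and hence an isomorphism, giving $M\cong F\in\mathcal{WF}_k^n(R^{op})$.

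For the linear chain, $(1)\Rightarrow(5)$ is a dimension-shift: under $(1)$ every $\Omega^{k'}L$ has $n$-weak flat dimension at most $k-k'$, so $\Omega^{k'}L\in\mathcal{WF}_{k-k'}^n(R^{op})$, and then ${\rm Ext}_{R}^{k'+1}(L,M)\cong{\rm Ext}_{R}^{1}(\Omega^{k'}L,M)=0$ for any $(n,k-k')$-weak cotorsion $M$, yielding ${\rm id}_{R}(M)\leq k'$. The implication $(5)\Rightarrow(6)$ is specialization, and for $(6)\Rightarrow(3)$, given an $(n,0)$-weak cotorsion $M$ and the $k'$ from $(6)$, Proposition \ref{1.66}(2) makes the $(k-k')$-th cosyzygy $D^{k-k'}$ of $M$ an $(n,k-k')$-weak cotorsion module, so $(6)$ gives ${\rm id}_{R}(D^{k-k'})\leq k'$, whence ${\rm id}_{R}(M)\leq (k-k')+k'=k$. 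I expect the main obstacle to be the twin converses $(4)\Rightarrow(2)$ and $(7)\Rightarrow(2)$, whose correctness genuinely rests on the hereditary property of the cotorsion pair; the $R_R$-trick underlying $(8)\Rightarrow(1)$ is compact once spotted but is the other key insight.
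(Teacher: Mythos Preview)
Your proof is correct, and the chain $(1)\Rightarrow(5)\Rightarrow(6)\Rightarrow(3)$ matches the paper essentially step for step. The equivalences $(1)\Leftrightarrow(2)\Leftrightarrow(4)\Leftrightarrow(7)\Leftrightarrow(8)$, however, are routed differently.

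For $(4)\Rightarrow\cdots$ the paper goes to $(1)$ directly using completeness of the cotorsion pair: given any $N$, \cite[Proposition 4.13]{AB} provides $0\to N\to X\to D\to 0$ with $X$ $(n,k)$-weak cotorsion and $D\in\mathcal{WF}_k^n(R^{op})$; assumption $(4)$ puts $X$ in $\mathcal{WF}_k^n(R^{op})$, and closure under kernels of epimorphisms (\cite[Corollary 3.4]{AB}) then forces $N\in\mathcal{WF}_k^n(R^{op})$. Your route through $(2)$ via heredity and the injective envelope is an equally valid alternative that trades the special preenvelope for the hereditary property you extracted from Proposition~\ref{1.66}(1). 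For $(7)\Rightarrow\cdots$ the paper's argument is actually shorter than yours: setting $m=i=0$ immediately gives ${\rm Ext}_R^1(M,N)=0$ for all $(n,k)$-weak cotorsion $M,N$, so every $(n,k)$-weak cotorsion module lies in ${}^{\bot}\mathcal{WC}_k^n(R^{op})=\mathcal{WF}_k^n(R^{op})$, which is $(4)$; your detour through heredity and $(2)$ is unnecessary here but not wrong. Finally, for $(8)\Rightarrow\cdots$ the paper runs a diagram chase (building an envelope of the cokernel of a cover and using the unique mapping property to collapse it), whereas your $R_R$-trick is considerably more economical: plugging the flat module $R_R\in\mathcal{WF}_k^n(R^{op})$ into the cover $\phi$ and reading off bijectivity from unique lifting is a clean one-line argument that bypasses the diagram entirely.
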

\begin{proof}
$(1)\Leftrightarrow (2)$ 
 ${\rm l.n.sp. gldim}(R)\leq k$ if and only if ${\rm pd}_{R}(K_{n-1})\leq k$ for all special super finitely presented module $K_{n-1}$ of any 
$n$-special super finitely presented left $R$-module $U$,  if and only if  every right $R$-module is in  $\mathcal{WF}_{ k}^{n}(R^{op})$  by \cite[Proposition 3.5]{AB}, if and only if 
 ${\rm Ext}_{R}^{1}(N, C)=0$  for any $R$-module $N$ and any $(n,k)$-weak cotorsion right $R$-module $C$.

$(1)\Rightarrow (4)$ Similar to the proof of $(1)\Rightarrow (2)$, it follows that every right $R$-module is in  $\mathcal{WF}_{ k}^{n}(R^{op})$.

$(4)\Rightarrow (1)$
Let $N$ be a right $R$-module. Then by \cite[Proposition 4.13 ]{AB}, there exists an exact sequence $0\rightarrow N\rightarrow X\rightarrow D\rightarrow0$, where $X$ is in 
$\mathcal{WF}_{ k}^{n}(R^{op})^{\bot}=\mathcal{WC}_{ k}^{n}(R^{op})$ and
$D$ is in  $\mathcal{WF}_{ k}^{n}(R^{op})$. By assumption, $X$ is in  $\mathcal{WF}_{ k}^{n}(R^{op})$. Therefore, from \cite[Corollary 34]{AB}, $N\in\mathcal{WF}_{ k}^{n}(R^{op})$, and so by 
\cite[Proposition 3.5]{AB}, it follows that ${\rm pd}_{R}(K_{n-1})\leq k$ for all special super finitely presented module $K_{n-1}$ of any 
$n$-special super finitely presented left $R$-module $U$. Consequently ${\rm l.n.sp. gldim}(R)\leq k$.

$(4)\Rightarrow (7)$
${\rm Ext}_{R}^{1}(M^{'},N)=0$ for all $(n,k)$-weak cotorsion  right $R$-modules $M^{'}$ and $N$ by assumption. Let $N$ be an  $(n,k)$-weak cotorsion right $R$-module and $D^m$
the $m$th cosyzygy of $N$. Then, we have  ${\rm Ext}_{R}^{m+1}(M^{'}, N)\cong{\rm Ext}_{R}^{1}(M^{'}, D^m)$. By Proposition \ref{1.66}(2), $D^m$ is $(n,k+m)$-weak cotorsion, and also it easy to see that
$D^m$ is $(n,k)$-weak cotorsion.  Since $M^{'}\in\mathcal{WF}_{ k}^{n}(R^{op})$, we deduce that ${\rm Ext}_{R}^{1}(M^{'}, D^m)=0$, and hence ${\rm Ext}_{R}^{1+i}(M^{'}, N)=0$ for any $i\geq m$. On the other hand, every 
$(n,k+m)$-weak cotorsion $M$ is $(n,k)$-weak cotorsion. Therefore, ${\rm Ext}_{R}^{1+i}(M,N)=0$ for any $i\geq m$ and for all $(n,k+m)$-weak cotorsion modules $M$ and $(n,k)$-weak cotorsion right $R$-modules $N$.

$(7)\Rightarrow (4)$
By assumption, ${\rm Ext}_{R}^{1}(M,N)=0$ for all $(n,k)$-weak cotorsion  right $R$-modules $M$ and $N$. So every $(n, k)$-weak cotorsion right $R$-module is in $\mathcal{WF}_{ k}^{n}(R^{op})$.

$(1)\Rightarrow (8)$ Clear.

$(8)\Rightarrow (4)$
Let $M$ be an $(n, k)$-weak cotorsion right $R$-module. Then by assumption,  we have the following commutative diagram
\begin{displaymath}\xymatrix{
		&&&0\ar[d]\\
		0\ar[r]&M \ar[r]^{\alpha} \ar[rd]_0& X \ar[r]^{\beta}\ar[d]^{\gamma\beta}& D\ar[r]\ar[ld]^{\gamma}& 0, \\
		&&X^{'}}
\end{displaymath}
where $\alpha$ and $\gamma$  are $\mathcal{WF}_{k}^n(R^{op})$-envelope. Since $\gamma\beta\alpha=0$, it follows that $\gamma\beta=0$ by assumption. Consequently 
$L={\rm im}(\beta)\subseteq {\rm ker}(\gamma)=0$ and so $L=0$. Hence, $M\cong X$, and so $M$ is in  $\mathcal{WF}_{ k}^{n}(R^{op})$.

$(1)\Rightarrow (5)$
Let $N$ be a right $R$-module. By \cite[Proposition 3.3] {AB}, $n.{\rm wfd}_{R}(N)\leq k$, and so there is an exact sequence:
$$ 0\to F_{k}\to F_{k-1}\to\cdots\to F_1\to F_0\to N\to 0,$$
where any $F_i$ is $n$-weak flat. Since $0\leq k^{'}\leq k$, the $(k^{'}-1)$th syzygy of $N\in\mathcal{WF}_{ k-k^{'}}^{n}(R^{op})$. If $L$ is the $(k^{'}-1)$th syzygy of $N$, then
 ${\rm Ext}_{R}^{k^{'}+1}(N, M)\cong{\rm Ext}_{R}^{1}(L, M)=0$ for any   $(n, k-k^{'})$-weak cotorsion right $R$-module $M$. Hence, ${\rm id}_{R}(M)\leq k^{'}$.

$(5)\Rightarrow (6)$ Clear.

$(6)\Rightarrow (3)$ Let $M$ be an $(n,0)$-weak cotorsion right $R$-module. Then, by Proposition \ref{1.66}(2), the $(n, k-k^{'})$th cosyzygy  $D^{ k- k^{'}}$ of $M$ is $(n, k-k^{'})$-weak cotorsion. 
So, ${\rm id}_{R}(D)\leq k^{'}$ and hence ${\rm id}_{R}(M)\leq k$.

\end{proof}

 \begin{prop}\label{prop-lnspgldim-k+2}
Let $n$ and $k$ be non-negative integers. If $R$ satisfies one of the following conditions:
\begin{enumerate}
\item [\rm (1)] 
Every $(n, k)$-weak cotorsion right $R$-module has a $\mathcal{WF}_{ k}^{n}(R^{op})$-envelope with the unique mapping property.
\item [\rm (2)] 
Every finitely presented right $R$-module has a $\mathcal{WF}_{ k}^{n}(R^{op})$-envelope with the unique mapping property.
\end{enumerate}
Then, ${\rm l.n.sp. gldim}(R)\leq k+2$.
\end{prop}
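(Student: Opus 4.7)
The plan is to apply the equivalence $(1)\Leftrightarrow(4)$ of Theorem~\ref{Thm-lnspgldim-k} with $k$ replaced by $k+2$: it suffices to prove that under either hypothesis every $(n,k+2)$-weak cotorsion right $R$-module $M$ lies in $\mathcal{WF}_{k+2}^n(R^{op})$. Since $\mathcal{WF}_k^n(R^{op})\subseteq\mathcal{WF}_{k+2}^n(R^{op})$, such an $M$ is also $(n,k)$-weak cotorsion by Remark~\ref{1.33}(2), so under hypothesis~(1), $M$ itself admits a $\mathcal{WF}_k^n(R^{op})$-envelope $\alpha\colon M\to X$ with the unique mapping property.

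The envelope $\alpha$ can be arranged to be a monomorphism by composing with the monic preenvelope supplied by the complete cotorsion pair $(\mathcal{WF}_k^n(R^{op}),\mathcal{WC}_k^n(R^{op}))$. Setting $Y=\Coker(\alpha)$, the short exact sequence $0\to M\to X\to Y\to 0$ has $X\in\mathcal{WF}_k^n(R^{op})$; applying UMP of $\alpha$ to the zero map $M\to W$ for $W\in\mathcal{WF}_k^n(R^{op})$ forces $\Hom_R(Y,W)=0$, and Wakamatsu's Lemma on the extension-closed class $\mathcal{WF}_k^n(R^{op})$ forces $\Ext_R^1(Y,W)=0$. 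The plan is then to iterate in order to bound $n\text{-}{\rm wfd}_R(Y)\leq k+1$: take a special $\mathcal{WF}_k^n(R^{op})$-precover $0\to A\to F\to Y\to 0$ with $A\in\mathcal{WC}_k^n(R^{op})$ (an $(n,k)$-weak cotorsion module), apply hypothesis~(1) again to $A$, and combine the two successive layers of UMP/Wakamatsu data to force $A\in\mathcal{WF}_k^n(R^{op})$, so that $Y\in\mathcal{WF}_{k+1}^n(R^{op})\subseteq\mathcal{WF}_{k+2}^n(R^{op})$. The class $\mathcal{WF}_k^n(R^{op})$ is closed under syzygies (the syzygy $K_n=\Im(F_{n+1}\to F_n)$ of a special super finitely presented $K_{n-1}$ is itself the special super finitely presented module attached to the $n$-super finitely presented module $K_0=\Im(F_1\to F_0)$), so the cotorsion pair $(\mathcal{WF}_{k+2}^n(R^{op}),\mathcal{WC}_{k+2}^n(R^{op}))$ is hereditary, and the exact sequence above then yields $M\in\mathcal{WF}_{k+2}^n(R^{op})$.

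Under hypothesis~(2), the strategy is parallel but the UMP envelope is supplied only for finitely presented modules; the plan is to write $M$ and the auxiliary module $A$ as directed unions of finitely presented submodules and transport the UMP vanishings across the colimit, using that $\mathcal{WF}_k^n(R^{op})$ is closed under direct limits by \cite[Theorem~4.5]{AB}. The main technical obstacle in both cases is the inner iteration step that converts the dual vanishings $\Hom_R(Y,\mathcal{WF}_k^n(R^{op}))=0$ and $\Ext_R^1(Y,\mathcal{WF}_k^n(R^{op}))=0$ into the concrete bound $n\text{-}{\rm wfd}_R(Y)\leq k+1$; once this is achieved, the rest of the argument is a standard diagram chase combined with the hereditary closure of the cotorsion pair.
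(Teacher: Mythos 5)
Your reduction via Theorem~\ref{Thm-lnspgldim-k}$(1)\Leftrightarrow(4)$ (with $k+2$ in place of $k$) is legitimate, but the argument then breaks at two points. First, you cannot arrange the $\mathcal{WF}_{k}^{n}(R^{op})$-envelope $\alpha\colon M\to X$ to be monic in the way you claim: completeness of the cotorsion pair $(\mathcal{WF}_{k}^{n}(R^{op}),\mathcal{WC}_{k}^{n}(R^{op}))$ supplies a \emph{monic special preenvelope into the right-hand class} $\mathcal{WC}_{k}^{n}(R^{op})$ and an \emph{epic special precover from} $\mathcal{WF}_{k}^{n}(R^{op})$; it gives no monic $\mathcal{WF}_{k}^{n}(R^{op})$-preenvelope. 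Such a monomorphism exists only when $M$ embeds into a member of that class, which in this paper requires extra hypotheses such as ${\rm n\text{-}wid}(_RR)\leq k$ (compare Proposition~\ref{1.666}(3)--(4), where exactly this hypothesis is needed to put an injective module into $\mathcal{WF}_{k}^{n}(R^{op})$); Proposition~\ref{prop-lnspgldim-k+2} assumes nothing of the sort. Without monicity, Wakamatsu's lemma does not apply and you do not even obtain the sequence $0\to M\to X\to Y\to 0$. Second, and more seriously, the step you yourself flag as ``the main technical obstacle'' --- forcing $A\in\mathcal{WF}_{k}^{n}(R^{op})$, i.e.\ $n\text{-}{\rm wfd}_R(Y)\leq k+1$ --- is the entire content of the proposition and is left unproved; moreover your setup cannot support the intended unique-mapping-property argument. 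After factoring the inclusion $i\colon A\to F$ through the UMP envelope $\beta\colon A\to L$ as $i=\gamma\beta$, the composite you would need to annihilate is $\pi\gamma\colon L\to Y$, where $\pi\colon F\to Y$ is the precover; the unique mapping property only kills maps into members of $\mathcal{WF}_{k}^{n}(R^{op})$, and $Y$ is not one.

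The paper's proof avoids both problems by never taking an envelope of the module being tested. It takes \emph{two successive} $\mathcal{WF}_{k}^{n}(R^{op})$-covers of an arbitrary right module $N$ and splices them into $0\to K'\to F'\to F\to N\to 0$ with $F,F'\in\mathcal{WF}_{k}^{n}(R^{op})$ and $K, K'$ $(n,k)$-weak cotorsion (kernels of covers lie in the right orthogonal class). The hypothesis is applied only to $K'$: its UMP envelope $\beta\colon K'\to L$ factors the monomorphism $i'\colon K'\to F'$ as $i'=\gamma\beta$, and the decisive composite $F'\to F$ now lands in $F$, a member of the class, so the UMP yields $\mathrm{im}(\gamma)\subseteq\mathrm{im}(i')$ and hence $\delta\beta=1_{K'}$; thus $K'$ is a direct summand of $L$, so $K'\in\mathcal{WF}_{k}^{n}(R^{op})$ and $N\in\mathcal{WF}_{k+2}^{n}(R^{op})$. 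If you wish to keep your framework, you must replace the envelope of $M$ and the single precover of $Y$ by two spliced special precovers of $M$ itself and run this diagram chase, at which point the detour through Theorem~\ref{Thm-lnspgldim-k} becomes superfluous. Finally, your treatment of hypothesis (2) (a colimit transfer of UMP envelopes from finitely presented modules to all modules) is likewise only a plan; the paper handles it by reducing to the enveloping statement for all modules via \cite[Lemma 3.2]{Ding}, using that $\mathcal{WF}_{k}^{n}(R^{op})$ is closed under direct limits.
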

\begin{proof}
Assume (1). Then, by \cite[Theorem 4.5]{AB}, every right $R$-module has a $\mathcal{WF}_{ k}^{n}(R^{op})$-cover. So, by Proposition \ref{1.5}(3) and \cite[Lemma 2.1.1]{JX},
 we have exact sequences
 $0\to K\stackrel{\displaystyle i}\to F\stackrel{\displaystyle \alpha}\to N\to0$  and 
 $0\to K^{'}\stackrel{\displaystyle i^{'}}\to F^{'}\stackrel{\displaystyle \alpha^{'}}\to K\to0,$
where $\alpha$ and $\alpha^{'}$ are $\mathcal{WF}_{ k}^{n}(R^{op})$-covers of $N$ and $K$, respectively. Also $K$ and $K^{'}$ are $(n, k)$-weak cotorsion. Hence, we obtain the exact sequence:
$$0\to K^{'}\stackrel{\displaystyle i^{'}}\to F^{'}\stackrel{\displaystyle i{\alpha}^{'}}\to F\stackrel{\displaystyle \alpha}\to N\to0.$$
Let $\beta : K^{'}\to L$ be a $\mathcal{WF}_{ k}^{n}(R^{op})$-envelope with the unique mapping property. Then there exists $\gamma : L\to F^{'}$ such that $i^{'}=\gamma\beta$. So $ i{\alpha}^{'}\gamma\beta= i{\alpha}^{'}i^{'}=0$, and then $i{\alpha}^{'}\gamma=0$, which
implies that ${\rm im}(\gamma)\subseteq{\rm ker}( i{\alpha}^{'})={\rm im}(i^{'})$. Therefore, there exists $\delta : L\to K^{'}$ such that $i^{'}\delta=\gamma$. Thus, we get the following exact commutative diagram:
\begin{displaymath}\xymatrix{
&&&&&&\\
0\ar[r]&K^{'}\ar[r]^{i^{'}}\ar[rd]^{\beta}&F^{'} \ar[r]^{i\alpha^{'}}& F \ar[r]^{\alpha}&N\ar[r]& 0, \\
&L\ar[u]^{\delta}\ar@{=}[r]&L\ar[u]^{\gamma}&&&}
\end{displaymath}
where $i^{'}\delta\beta=i^{'}$. So $\delta\beta=1_{K^{'}}$ (because $i^{'}$ is monic). Thus, $K^{'}$ is isomorphic to
a direct summand of $L$, and hence by using of \cite[Proposition 2.9 ]{AB}, it follows that 
$K^{'}\in\mathcal{WF}_{k}^{n}(R^{op})$. Therefore, we deduce that $K\in\mathcal{WF}_{ k+1}^{n}(R^{op})$ and consequently $N\in\mathcal{WF}_{ k+2}^{n}(R^{op})$.  Hence, by \cite[Proposition 3.3]{AB}, $n$-${\rm wfd}_{R}(N)\leq k+2$, and so  \cite[Proposition 3.5]{AB} implies that ${\rm l.n.sp. gldim}_{R}(R)\leq k+2$.

For the converse, by \cite[Lemma 3.2]{Ding}, every right $R$-module has a $\mathcal{WF}_{ k+1}^{n}(R^{op})$-envelope with the unique
mapping property (because $\mathcal{WF}_{ k+1}^{n}(R^{op})$ is closed under direct limits).
\end{proof}


\end{document}